\documentclass[reqno]{amsart}
\usepackage{graphicx}
\usepackage{amsthm,amsmath,verbatim,amssymb}
\usepackage{subcaption}
\usepackage{caption}
\usepackage{color}
\usepackage{arydshln}
\usepackage[toc,page]{appendix}

\newtheorem{prop}{Proposition}
\newtheorem{thm}{Theorem}

\newcommand{\abs}[1]{\left\vert#1\right\vert}

\newcommand{\C}{\mathbb{C}}
\newcommand{\E}{\mathbb{E}}
\newcommand{\N}{\mathbb{N}}
\newcommand{\PR}{\mathbb{P}}

\newcommand{\D}{\mathbb{D}}
\title[Zeros of repeated derivatives]{Zeros of repeated derivatives of random polynomials}
\author[Feng]{Renjie Feng}
\author[Yao]{Dong Yao}
\address{Beijing International Center for Mathematical Research, Peking University,  China}
\email{renjie@math.pku.edu.cn}
\address{Duke University, USA}
\email{dong.yao@duke.edu}

% !TeX spellcheck = en_US 

\begin{document}

\date{\today}
   \maketitle
   \begin{abstract}
It has been shown that zeros of Kac polynomials $K_n(z)$ of degree $n$ cluster asymptotically near the unit circle as $n\to\infty$ under some assumptions. This property remains unchanged for the $l$-th derivative of the Kac polynomials $K^{(l)}_n(z)$ for any fixed order $l$.  So it's natural to study the situation when the number of the derivatives we take depends on $n$, i.e., $l=N_n$.  We will show that the limiting global behavior of zeros of $K_n^{(N_n)}(z)$ depends on the limit of the ratio $N_n/n$. In particular,  we prove that when the limit of the ratio is strictly positive, the property of the uniform clustering around the unit circle  fails; when the ratio is close to 1, the zeros have some rescaling phenomenon. Then we study such problem for random polynomials with more general coefficients. But things, especially the rescaling phenomenon, become very complicated for the general case when $N_n/n\to 1$, where we compute the case of the random elliptic polynomials to illustrate this. 

  \end{abstract}
\section{Introduction}
%The distribution of zeros of random polynomials and random power series has been studied for decades, it's hard to list all the references, we refer to \cite{BB,  HKPV, IZ1,IZ,  Kac, KZ, Sodin} and the references therein.
%There are many
%well-known results regarding the global distribution of zeros of special Gaussian random polynomials
%where the ensembles are usually invariant under some group action, such as the Kac polynomials, elliptic polynomials and hyperbolic polynomials \cite{HKPV}. There are also several known results on zeros of  general random polynomials recently, e.g., \cite{IZ1, KZ}.  The classical Gauss-Lacus theorem states that all the critical points of a polynomial are in the convex hull of its zeros, which indicates that there are some nontrivial relations between zeros and critical points of polynomials. It seems that the distribution of critical points of random polynomials is asymptotic to the distribution of its zeros under some assumptions \cite{F, KZ}. In this article, we will further study the asymptotic distribution of zeros the derivatives of random polynomials $p_n^{(N_n)}(z)$, where the  the number of derivatives $N_n$ is growing with the degree of the random polynomials. 
There are many well known results regarding the nontrivial relations between zeros and critical points of polynomials. The classical Gauss-Lucas theorem states that all the critical points of a polynomial are in the convex hull of its zeros, in particular, if  all the zeros are real, then so are the
zeros of the derivative.
 %Thus the  differentiation is one type of real zero preserving operators which are intensively studied by  P\'olya \cite{PS}.  
 Differentiating a polynomial which has only real zeros will  even out zero spacings  \cite{DR}; in the case of  random
trigonometric polynomials, it's proved in \cite{DY} that the repeated differentiation causes the roots of the function to approach equal spacing, which can be viewed as a toy model of crystallization in one dimension. For random polynomials under some mild assumptions,   the distribution of critical points and the distribution of its zeros are asymptotically the same as the degree tends to infinity. This is because, roughly speaking,  the coefficients of the derivative of a random polynomial are not changed dramatically. Actually, such result  holds for  any fixed number of differentiation \cite{KZ}. 
%One may even prove that the zeros and critical points appear in rigidity pairs \cite{Ha}.
In this article, we are primarily interested in the case  when the number of the derivatives  we take for the random polynomials is not fixed but grows to infinity with the degree.
%we will further study the asymptotic distribution of zeros the derivatives of random polynomials $p_n^{(N_n)}(z)$, where the  the number of derivatives $N_n$ is growing with the degree of the random polynomials instead of a fixed one. 

Our starting point is the classical Kac polynomials. Let $\xi_0, \xi_1,\cdots$  be nondegenerate, independent and
identically distributed (i.i.d.) complex random variables. The Kac polynomials are defined as
\begin{equation}\label{kacdef}
	K_n(z) =\sum_{k=0}^n \xi_k z^k. 
\end{equation}
The Kac polynomials have degree $n$ almost surely by assuming
\begin{equation}\label{condition2}
\PR(\xi_0=0)=0.
\end{equation}
The distribution of zeros of Kac polynomials has been  studied for decades,  we refer to \cite{HKPV, IZ1,IZ,  Kac, KZ2, KZ, Sodin, SV} and the references therein. It's proved that if
\begin{equation}\label{condition1}
\E\log(1+|\xi_0|)< \infty,
\end{equation}
then with probability $1$, the empirical measure of zeros of Kac polynomials 
converges weakly to the uniform probability measure on the unit circle as  $n$  tends to infinity  \cite{IZ1, IZ, KZ2, KZ, SV}.  If the assumption \eqref{condition1} is removed, then zeros of $K_n(z)$ may not concentrate around the unit circle, see \cite{IZ, KZ2} for the case when $\abs{\xi_0}$ has some logarithmic tails.

The property of clustering around the unit circle remains unchanged for the $l$-th derivative of the Kac polynomials $K^{(l)}_n(z)$ for any fixed $l$ as $n$ tends to infinity \cite{KZ}. But things become interesting if the number of the derivatives we take depends on $n$, e.g., $l=N_n$. For the extreme case when $N_n=n$, there is no zero for $K_n^{(n)}$ almost surely. Hence, some natural questions are:
\begin{itemize} \item What is the critical growth order of $N_n$ so that the  property of clustering around the unit circle for the Kac polynomials $K_n^{(N_n)}$ fails?\item When it fails, what is the distribution of zeros of  $K_n^{(N_n)}$?  And how does the distribution depend on the growth order of $ N_n$? \end{itemize}  In this article, we will answer these questions for the Kac polynomials completely. The estimates we derive for the Kac case can be applied to  the general random polynomials. But there are some issues for the general random polynomials, where we will compute the case of the random elliptic polynomials to illustrate this.  

%Actually Kabluchko-Zaporozhets proved more general results regarding zeros of random polynomials and random analytic functions under some assumptions on the coefficients. 
\subsection{Notations}
Before we state our main results, we need to introduce some notations. 
We denote by 
 %The distribution of zeros of random polynomials and random power series has been studied for decades. In the fundamental case of Kac polynomial, which is defined as
 \begin{equation}\label{general}
   p_n(z)=\sum_{k=0}^n \xi_k p_{k,n} z^k,
 \end{equation} the random polynomials of degree $n$ with general coefficients,
  where $p_{k,n}$ are deterministic coefficients and  $\xi_k$ are nondegenerate i.i.d. complex random variables.  Throughout the article, we assume the random variable $\xi_0$ satisfies the conditions  \eqref{condition2} and \eqref{condition1}.

 We denote by $p_n^{(N_n)}(z)$ the $N_n$-th derivative of $p_n(z)$ with the degree \begin{equation}\label{dn}D_n=n-N_n.\end{equation}
 Without loss of generality, we may assume the convergence of  \begin{equation}\frac{N_n}{n}\to a \in [0,1].\end{equation}
The  random measure of zeros of  $p_n(z)$ is denoted by \begin{equation}\mu_n= \sum_{z:\,p_n(z)=0}  \delta_z,\end{equation} 
and  we use the notation  \begin{equation}\label{um}\mu_{D_n}= \sum_{z :\,p^{(N_n)}_n(z)=0}  \delta_z\end{equation}
for the  random measure of zeros of $p^{(N_n)}_n(z)$ of degree $D_n$.

Similarly, we denote by $\mu_n^K$ and $\mu_{D_n}^K$ the random  measures of zeros of $K_n(z)$ and $K^{(N_n)}_n(z)$  for the Kac polynomials, respectively, and we denote by $\mu_n^E$ and $\mu_{D_n}^E$ the random elliptic polynomials. We denote by $\D_r$ the open disk of radius $r$ centered at the origin in the complex plane.
The convergence of the random measures $\nu_n$  to $\nu$ in probability (or in distribution) means the convergence in probability (or in distribution) in the weak sense, i.e., $\int_X \phi \nu_n(dx)\to \int_X \phi \nu(dx)$ in probability (or in distribution) for any smooth test function $\phi$ with compact support. Given a measure $\nu$   on the complex plane, we define the scaling operator $$(\mathcal S_h\nu)(B)=\nu(\frac B h)$$ for $h>0$ where $B$ is any Borel set in $\mathbb{C}$.  In the end, we set $a\wedge b=\min\{a,b\}$ and $a\vee b=\max\{a,b\}$ and set $\log 0=-\infty$.

\subsection{Kabluchko-Zaporozhets Theorem}
There are many well known results regarding the global distribution of zeros of some special Gaussian random analytic functions where the ensembles are usually invariant under some group action, such as the Gaussian elliptic polynomials and Gaussian hyperbolic analytic functions \cite{HKPV, Sodin}. Recently, a remarkable result proved  in \cite{KZ}  deals with more general random analytic functions. In \cite{KZ}, Kabluchko-Zaporozhets proved that under certain assumptions on the coefficients of the random analytic functions, the distribution of zeros will converge to a deterministic rotationally invariant measure on a domain of the complex plane. Such measure can  be explicitly characterized in terms of the coefficients.  To be more precise, let's consider the random analytic function in the form of \begin{equation}\label{analytic}F_n(z)=\sum_{k=0}^{\infty} \xi_kp_{k,n}z^k,\end{equation}where $\xi_k$ are nondegenerate i.i.d. complex random variables satisfying condition \eqref{condition1} and the coefficients $p_{k,n}$ satisfy the following assumptions,
\\
 
\textbf{A1}: Assume there is a function $p:[0,\infty)\to [0,\infty)$ and a number $T_0 \in (0,\infty]$ such that

\begin{enumerate}
  \item[ 1.] $p(t)>0$ for $t<T_0$ and $p(t)=0$ for $t>T_0$.
  \item[ 2.] $p$ is continuous on $[0,T_0)$, and in the case $T_0<\infty$, left continuous at $T_0$.
  \item[ 3.] $\lim\limits_{n\to\infty}\sup_{k\in[0,An]} \abs{\abs{p_{k,n}}^{\frac{1}{n}}-p(\frac{k}{n})  }=0$
       for every $A>0$.
  \item[ 4.] $R_0=\liminf\limits_{t\to\infty}p(t)^{-1/t}\in (0,\infty]$, $\liminf\limits_{k\to\infty}|p_{k,n}|^{-1/k}\geq R_0$ for every fixed $n \in \N$ and additionally, $\liminf\limits_{n,k/n \to \infty} \abs{p_{k,n}}^{-1/k}\geq R_0$.
\end{enumerate}
%If Assumption 1 hold for $p_{k,n}$, we say $p_{k,n}$ is associated with $p$ by Assumption 1.
 
 Roughly speaking, the major assumption is that the coefficients $p_{k,n}$ are approximately $e^{n\log p(\frac{k}{n})}$ for some $p$, which is positive on some interval $[0,T_0)$, continuous in $[0,T_0]$ and   equal to $0$ in $(T_0,\infty)$.  We have, 
\begin{thm}(Kabluchko-Zaporozhets  \cite{KZ})\label{theorem1}
Under \textbf{A1} and \eqref{condition1}, let $I(s)$ be the Legendre-Fenchel transform of $-\log p$,  i.e., $I(s)=\sup_{t\geq 0}(st+\log p(t))$,  then the random measure $\frac{1}{n}\mu_{F_n}$ of zeros of $F_n(z)$  converges in probability to a deterministic measure $\mu$ in $\D_{R_0}$, which is rotationally invariant and satisfies
\begin{equation*}
  \mu(\D_r)=I'(\log r), \quad\quad r\in(0,R_0).
\end{equation*}
\end{thm}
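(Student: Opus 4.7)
The plan is to carry out the standard logarithmic potential program: realize the normalized counting measure as a distributional Laplacian, $\frac{1}{n}\mu_{F_n} = \frac{1}{2\pi n}\Delta \log\abs{F_n(z)}$, and reduce the weak convergence of measures to the $L^1_{\mathrm{loc}}$ convergence of the potentials $U_n(z) := \frac{1}{n}\log\abs{F_n(z)}$. The candidate limit is $u(z) := I(\log\abs{z})$, and I would finish by checking that $\frac{1}{2\pi}\Delta u$ produces precisely the claimed rotationally invariant measure on $\D_{R_0}$.

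To identify the limit, first I would argue heuristically from Assumption A1 that $\abs{p_{k,n}}^{1/n} \approx p(k/n)$, so for $\abs{z}=e^s$ the dominant term of $F_n$ has modulus approximately $\exp\bigl(n\sup_{t\ge 0}(st+\log p(t))\bigr) = e^{nI(s)}$. For the upper bound on $U_n$, I would split $F_n$ into a low-frequency part $k\le An$ and a tail $k>An$: the low-frequency part is bounded by $(An+1)$ times its largest summand, and $\frac{1}{n}\log\max_{k\le An}\abs{\xi_k}\to 0$ a.s.\ by Borel--Cantelli applied to the log-moment condition \eqref{condition1}; the tail is controlled by item 4 of A1, which forces it to be subexponentially small relative to $e^{nI(s)}$ whenever $\abs{z}<R_0$. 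Together with the uniform approximation of $\abs{p_{k,n}}^{1/n}$ by $p(\cdot/n)$, this gives $\limsup_n U_n(z) \le I(\log\abs{z})$ almost surely for every fixed $z\in\D_{R_0}$.

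The harder direction, and the main obstacle, is the matching lower bound and the upgrade from pointwise-in-probability convergence to $L^1_{\mathrm{loc}}$ convergence, since $\log\abs{F_n}$ can be very negative near zeros. For the pointwise lower bound I would fix $z$ with $\abs{z}=r$, pick an index $k^*=k^*(n,r)$ that (approximately) maximizes $\abs{p_{k,n}}r^k$, and condition on all $\xi_k$ with $k\ne k^*$; the remaining randomness $\xi_{k^*}p_{k^*,n}z^{k^*}$ contributes a single term that an anti-concentration (small-ball) estimate shows is unlikely to be cancelled by the rest, giving $U_n(z)\ge I(\log\abs{z})-\varepsilon$ with high probability. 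To pass to $L^1_{\mathrm{loc}}$, I would establish a uniform upper envelope $U_n(z)\le I(\log\abs{z})+o(1)$ on compact subsets of $\D_{R_0}$ (a deterministic-in-probability statement, using continuity of $I$ and again A1), combined with a uniform bound on $\int_K (\log\abs{F_n})_-\,dA$ obtained from the Poisson--Jensen formula applied on a slightly larger disk: the negative part of $\log\abs{F_n}$ is integrable, controlled by the number of zeros times a logarithm, and Jensen's inequality tied to the upper envelope prevents mass from escaping to $-\infty$.

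Once $U_n\to u$ in $L^1_{\mathrm{loc}}(\D_{R_0})$, the Laplacian converges in the distributional sense, so $\frac{1}{n}\mu_{F_n}\to \mu := \frac{1}{2\pi}\Delta u$ in probability in the weak sense. To extract the formula for $\mu(\D_r)$, I would note that $u(z)$ is radial and, since $I$ is the Legendre--Fenchel transform of the convex function $-\log p$, convex and nondecreasing in $s=\log\abs{z}$. Green's identity in polar coordinates then gives
\begin{equation*}
\mu(\D_r) \;=\; \frac{1}{2\pi}\int_{\D_r} \Delta u \, dA \;=\; \frac{1}{2\pi}\oint_{\d\D_r} \frac{\d u}{\d r}\,ds \;=\; r\cdot \frac{I'(\log r)}{r} \;=\; I'(\log r),
\end{equation*}
so $\mu$ is rotationally invariant with the prescribed radial distribution, completing the proof. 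The anti-concentration step and the uniform $L^1$-integrability of $(\log\abs{F_n})_-$ are the places where the argument is most delicate and where the hypothesis \eqref{condition1} is used essentially.
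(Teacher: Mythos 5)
Your overall architecture is the one actually used for this theorem: the paper does not reprove it but cites \cite{KZ} and sketches the strategy in Appendix A, and that strategy is exactly your program --- establish $\frac 1n\log\abs{F_n(z)}\to I(\log\abs z)$ in probability via matching upper and lower exponential bounds on $\abs{F_n(z)}$, then pass to the zero measures by Poincar\'e--Lelong. Your upper bound (largest term times $An+1$, with $\max_{k\le An}\abs{\xi_k}^{1/n}\to1$ a.s.\ from \eqref{condition1} by Borel--Cantelli, and item 4 of \textbf{A1} killing the tail) and the final Green's-identity computation of $\mu(\D_r)=I'(\log r)$ are both correct.

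The one step that fails as stated is the lower bound. You condition on all $\xi_k$ with $k\ne k^*$ and apply a small-ball estimate to the single term $\xi_{k^*}p_{k^*,n}z^{k^*}$. The scale at which you need anti-concentration is $e^{-\epsilon n}\abs{p_{k^*,n}z^{k^*}}$, i.e.\ a radius tending to $0$ relative to the term itself, so the bound you obtain is $\sup_c\PR(\abs{\xi_0-c}\le\delta_n)$ with $\delta_n\to0$, which converges to the mass of the largest atom of $\xi_0$. The hypotheses permit atoms (e.g.\ $\xi_0$ uniform on $\{1,2\}$ is nondegenerate, never zero, and satisfies \eqref{condition1}), so in general this only yields $\PR(\abs{F_n(z)}<e^{n(I(\log\abs z)-\epsilon)})\le 1-c$ for some fixed $c>0$ rather than a quantity tending to $0$, and pointwise convergence in probability does not follow. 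The repair, which is what \cite{KZ} actually do (and what the $O(1/\sqrt{L_n})$ rate in the Appendix's bound \eqref{lower bound} reflects), is to use the whole interval $J$ of $t$ with $st+\log p(t)\ge I(s)-\epsilon$: it contributes order $n$ indices $k$ with $\abs{p_{k,n}z^k}\ge e^{n(I(s)-2\epsilon)}$, each term having concentration function at most $1-c$ at the relevant scale, and the Kolmogorov--Rogozin inequality applied to the sum of these independent terms gives a small-ball probability $O(1/\sqrt n)$. With that substitution your argument goes through.
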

As a convention, $I'$ is the left derivative of $I$. A typical example to apply Kabluchko-Zaporozhets Theorem is the Kac polynomials where we have \begin{equation}\label{examplekac1}p_{k,n}=1_{k\leq n},\,\, p(t)=1_{t\leq 1},\,\, T_0=1.\end{equation} By some computations, we have $I(s)=s\vee 0$ and thus the limiting distribution satisfies
\begin{equation}\label{examplekac}
\mu(\D_r)=\begin{cases}
0 &0\leq r\leq 1,\\ 1 &r>1,
\end{cases}
\end{equation}
 i.e., the uniform probability measure on the unit circle.

 But we can not apply Kabluchko-Zaporozhets Theorem directly in our case to derive the distribution of zeros of $K_n^{(N_n)}$ or that of the general random polynomials $p_n^{(N_n)}$. For example, if $N_n=n-\left\lfloor \log n \right\rfloor$, then the degree of $p_n^{(N_n)}$ is $D_n=\left\lfloor \log n \right\rfloor$, therefore,  one can not find some $A$ so that  the assumption 3. in \textbf{A1} is satisfied.   We need to modify their theorem 
to deal with our situation more conveniently.  We consider the random polynomials in the form of 
\begin{equation}\label{analytic22}F_n(z)=\sum_{k=0}^{ (T_0-\delta_n)L_n} \xi_kp_{k,n}z^k, \end{equation}
where  $(T_0-\delta_n)L_n$ is an integer and we assume that $F_n(z)$ satisfies the following assumptions,  
\\

\textbf{A2}: There exists a function $p:[0,\infty)\to [0,\infty)$, a positive number $T_0 \in (0,\infty)$, a sequence of positive integers $L_n$ going to $\infty$  as $n\to\infty$, a sequence of numbers $\delta_n \in(-T_0,T_0)$ (not necessarily positive) that goes to 0 as $n \to \infty$ such that
\begin{enumerate}
  \item [1.] $p(t)>0$ for $t\in [0,T_0)$ and $p(t)=0$ for $t> T_0$.
  \item [2.]$p$ is continuous in $[0,T_0]$.
  \item [3.]$\lim\limits_{n\to\infty}\sup_{0\leq k \leq (T_0-\delta_n)L_n}\abs{\abs{p_{k,n}}^{\frac{1}{L_n}}-p(\frac{k}{L_n}\wedge T_0 )}=0.$
 % \item [4.]$p_{k,n}=0$ if $k>(T_0-\delta_n)L_n$.
\end{enumerate} 

 Then we have the following theorem where the proof is sketched in Appendix A, 
\begin{thm}\label{theorem2}
For random polynomials  $F_n(z)$ in the form of \eqref{analytic22} which satisfy the assumptions \textbf{A2}, let I(s) be the Legendre-Fenchel transform of $-\log p$, then the random measure   $\frac{1}{L_n}\mu_{F_n}$ of zeros will converge in probability to a deterministic rotationally invariant measure $\mu$ where 
\begin{equation}\label{limiting measure}
  \mu(\D_r)=I'(\log  r),\,\,\,\,\,\, \,\,\,r>0.
\end{equation}
\end{thm}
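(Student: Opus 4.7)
The plan is to follow the potential-theoretic strategy of Kabluchko-Zaporozhets \cite{KZ}, with the degree-normalization $1/n$ replaced by $1/L_n$. The central step is to show that
$$u_n(z) := \frac{1}{L_n}\log|F_n(z)|$$
converges in probability, in $L^1_{loc}(\C)$, to the deterministic subharmonic function $u(z) := I(\log|z|)$. Once this is done, the Poincar\'e-Lelong formula $\frac{1}{2\pi}\Delta \log|F_n| = \mu_{F_n}$ (as distributions) yields, upon integrating against a test function $\phi$ and using integration by parts,
$$\int \phi\, d\bigl(\tfrac{1}{L_n}\mu_{F_n}\bigr) = \frac{1}{2\pi}\int (\Delta\phi)\, u_n \, dA \longrightarrow \frac{1}{2\pi}\int (\Delta\phi)\, u\, dA = \int \phi\, d\mu,$$
giving the desired weak convergence. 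A direct computation shows that $\mu := \frac{1}{2\pi}\Delta I(\log|z|)$ is rotationally invariant with $\mu(\D_r) = I'(\log r)$.

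For the upper bound $u_n(z) \le u(z) + o(1)$, I would apply the triangle inequality
$$|F_n(z)| \le (D_n+1)\max_{0\le k\le D_n}|\xi_k|\,|p_{k,n}|\,|z|^k,$$
and use assumption \textbf{A2.3} together with the continuity of $p$ on $[0,T_0]$ to see that
$$\max_{0\le k\le D_n}\bigl(|p_{k,n}|^{1/L_n}|z|^{k/L_n}\bigr) \longrightarrow \sup_{t\in[0,T_0]}\bigl(p(t)\, e^{t\log|z|}\bigr) = e^{I(\log|z|)}.$$
The polynomial factor contributes $O(\log L_n)$ after taking $\log$, and the contribution of the $\xi_k$ is handled by \eqref{condition1} via a Borel-Cantelli argument, giving $\max_{k\le D_n}|\xi_k|^{1/L_n} \to 1$ almost surely. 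Dividing by $L_n$ and taking $\limsup$ yields the bound.

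For the matching lower bound one must rule out random cancellation. The standard approach is to pick $k^\star$ close to $L_n t^\star$, where $t^\star \in [0,T_0]$ realizes the supremum in $I(\log|z|)$, and then condition on $(\xi_k)_{k\ne k^\star}$ to show via an anti-concentration estimate that the block of terms near index $k^\star$ typically has modulus at least $e^{L_n(I(\log|z|)-\epsilon)}$. Once pointwise convergence in probability is established, it is upgraded to $L^1_{loc}$-convergence using the classical fact that a sequence of subharmonic functions, locally uniformly bounded above and converging pointwise outside a polar set, converges in $L^1_{loc}$.

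The main obstacle will be the lower bound when the maximizer $t^\star$ sits at (or very near) the truncation boundary $T_0$: one must verify that $k^\star$ can actually be chosen inside the allowed range $[0, (T_0-\delta_n)L_n]$ and that $|p_{k^\star,n}|^{1/L_n}$ still approximates $p(t^\star)$. The continuity of $p$ up to $T_0$ required by \textbf{A2.2}, together with $\delta_n\to 0$, exactly matches this need and allows a perturbation argument to close the gap between the truncated sum and the full variational problem defining $I$.
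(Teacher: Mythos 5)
Your proposal follows essentially the same route as the paper's Appendix A: both reduce the theorem, via the Poincar\'e--Lelong formula, to the pointwise convergence in probability of $\frac{1}{L_n}\log|F_n(z)|$ to $I(\log|z|)$, proved by an upper bound (triangle inequality plus the subexponential growth $|\xi_k|\le M'e^{Ak}$ furnished by \eqref{condition1}) and a lower bound by anti-concentration on the dominant block of indices, with the boundary effect at $T_0$ absorbed by $\delta_n\to 0$ exactly as you describe. The one point to tighten is the lower bound: conditioning on all coefficients but a single $\xi_{k^\star}$ only gives a probability bounded away from $1$ by nondegeneracy, whereas one needs a Kolmogorov--Rogozin-type estimate over the whole block of $\gtrsim L_n$ indices (as in estimate (27) of \cite{KZ}) to obtain the $O(1/\sqrt{L_n})$ decay required for the $L^1_{\mathrm{loc}}$ convergence in probability.
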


Throughout the article, we often make use of the following estimate 
\begin{equation}\label{assumption 2 point 3}
  \lim\limits_{n\to\infty} \sup_{0\leq k \leq (T_0-\delta_n)L_n} \abs{\frac{1}{L_n}\log |p_{k,n}|-\log p(\frac{k}{L_n}\wedge T_0) }=0.
\end{equation}
This estimate implies the main assumption 3. in  \textbf{A2}, which is the direct consequence of the following inequality
$$\label{equationddd}
  \abs{x-y}\leq (x\wedge y)e^{\abs{\log x-\log y}}\abs{\log x-\log y}
$$for any $x,y>0$. %, which is the  the intermediate value theorem, we have
%$$|e^t-e^s|=e^{\xi}|s-t|\leq (e^t\vee e^s) |s-t|= (e^t\wedge e^s) e^{|s-t|} |s-t|.$$
%The inequality \eqref{equationddd} follows by replacing $s=\log x$ and $t=\log y$. 
The main advantage of \eqref{assumption 2 point 3} is the convenience in computations.

\subsection{Main results}
We first state our main results for the Kac polynomials, which will answer the questions we raised at the beginning of the article.   
\subsubsection{Kac polynomials}
 The main result is that the limiting behavior of the distribution of zeros of  $K_n^{(N_n)}$ will depend on the limit of the ratio $N_n/n$.  We will divide our discussions into two categories: $D_n$ goes to infinity and $D_n$ remains to be a fixed number, where $D_n=n-N_n$ is the degree of the random polynomials $K_n^{(N_n)}$. 
Without loss of generality, we consider the following four different cases 
\textcircled{1}$N_n/n\to 0$; \textcircled{2}$N_n/n\to a\in (0,1)$; \textcircled{3} $N_n/n\to 1$ and $D_n\to\infty$, e.g., $N_n= n-\left\lfloor \log n \right\rfloor$ and $D_n=\left\lfloor \log n \right\rfloor$; \textcircled{4} $N_n/n\to 1$ but  $D_n=m<\infty$, i.e., $K_n^{(N_n)}$ has a fixed degree $m$.  

In the cases of \textcircled{1}\textcircled{2}\textcircled{3} where $D_n\to\infty$, we will show that the coefficients of $K_n^{(N_n)}$ or its rescaling will satisfy the assumptions \textbf{A2} with different choices of $L_n$, $\delta_n$, $T_0$ and $p$, then we apply Theorem \ref{theorem2} to prove
\begin{thm}\label{theorem3}
 Assume $D_n\to\infty$ as $n\to\infty$,  we have the following results regarding the empirical measure of zeros of derivatives of Kac polynomials $K^{(N_n)}_n$, 
\begin{enumerate}
  \item If $\lim\limits_{n\to\infty}\frac{N_n}{n}=0 $, then $\frac{1}{D_n}\mu^K_{D_n} $ converges in probability to the uniform probability measure on the unit circle, i.e., the measure defined in \eqref{examplekac};

  \item If $\lim\limits_{n\to \infty}\frac{N_n}{n}=a \in (0,1)$, then $\frac{1}{D_n}\mu^K_{D_n}$ converges in probability to a rotationally invariant measure $\mu_a^K$ on $\C$ defined by
      \begin{equation}\label{ddddd}\mu^K_a(\D_r)=\begin{cases}
\frac{ar}{(1-a)(1-r)} &0<r<1-a, \\ 1 & r\geq 1-a;
\end{cases}
\end{equation}
 %and hence the density is
    %  \begin{equation}\label{ddddd}d^K_a =
%\frac{ar}{2\pi(1-a) }\frac1{r(1-r)^2}1_{|z|\leq 1-a}. \end{equation}

  \item If $\lim\limits_{n\to \infty}\frac{N_n}{n}=1$, then globally we have the following  convergence in probability\begin{equation}\label{fdff}\frac{1}{D_n}\mu^K_{D_n}\to \delta_0 .   \end{equation} 
   If we set  $R_n=\frac{n}{D_n}$ as the quotient of the degrees of $K_n$ and $K_n^{(N_n)}$ and consider the rescaling Kac polynomials $\tilde K_{n}(z):=K_{n}^{(N_n)}(\frac{z}{R_n})$, then the empirical measure $\frac{1}{D_n}\mu_{D_n}^{\tilde K}$ which is the same as $\frac{1}{D_n}\mathcal S_{R_n}(\mu_{D_n}^{ K})$ converges in probability to a rotationally invariant measure $\tilde\mu^K$ where
      \begin{equation}\label{fff}
      	 \tilde \mu^K(\D_r)= \begin{cases}
r &r<1, \\ 1 &r\geq 1.
\end{cases}
      \end{equation} 
In particular, the density for the measure $\tilde \mu^K$ is   \begin{equation}\label{ffdddddf}\tilde d^K(z)=\frac{1}{2\pi\abs{z}}1_{\abs{z}\leq 1}.\end{equation}
%which blows up at $0$.
\end{enumerate}
\end{thm}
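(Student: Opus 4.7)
The plan is to apply Theorem~\ref{theorem2} to an appropriately normalized version of $K_n^{(N_n)}$ in each of the three regimes, identifying a choice of $L_n$, $T_0$ and $p(t)$ satisfying \textbf{A2}, and then computing the Legendre--Fenchel transform $I$ of $-\log p$ to recover the limiting measure via $\mu(\D_r)=I'(\log r)$. The common first step is the identity
\[
K_n^{(N_n)}(z)=N_n!\sum_{k=0}^{D_n}\xi_{k+N_n}\binom{k+N_n}{k}z^k,
\]
which shows (since multiplying by the scalar $N_n!$ does not affect zeros) that one may work with the polynomial whose coefficients are $\hat p_{k,n}=\binom{k+N_n}{k}$. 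The main asymptotic tool is the Stirling--entropy estimate $\log\binom{m}{r}=mH(r/m)+O(\log m)$, where $H$ denotes the binary entropy, equivalently $\log\binom{k+N_n}{k}=k\log\frac{k+N_n}{k}+N_n\log\frac{k+N_n}{N_n}+O(\log n)$.

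For case (1) I would take $L_n=D_n$, $T_0=1$. Using the decomposition above together with the elementary facts that $y\log(1+1/y)$ is bounded on $(0,\infty)$ and $-x\log x\to 0$ as $x\to 0^+$, one checks that $\log\binom{k+N_n}{k}/D_n\to 0$ uniformly on $0\leq k\leq D_n$ whenever $N_n/n\to 0$, so $p(t)\equiv 1$ on $[0,1]$; then $I(s)=s\vee 0$, $I'(\log r)=\mathbf{1}_{r\geq 1}$, which is the uniform measure on the unit circle. For case (2), the same choice $L_n=D_n$, $T_0=1$ with $k=tD_n$, $D_n\sim(1-a)n$, $N_n\sim an$ yields
\[
\log p(t)=\frac{1}{1-a}\bigl[(a+(1-a)t)\log(a+(1-a)t)-(1-a)t\log((1-a)t)-a\log a\bigr].
\]
Since $(\log p)'(t)=\log\frac{a+(1-a)t}{(1-a)t}$, solving $\sigma+(\log p)'(t)=0$ gives the critical point $t^*(\sigma)=\frac{a}{(1-a)(e^{-\sigma}-1)}$, which lies in $(0,1)$ exactly when $\sigma<\log(1-a)$. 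By the envelope theorem $I'(\sigma)=t^*(\sigma)$ on that range and $I'(\sigma)=1$ for $\sigma\geq\log(1-a)$, and substituting $\sigma=\log r$ recovers exactly the formula for $\mu_a^K(\D_r)$ in~\eqref{ddddd}.

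Case (3) is the most delicate because the formal $a\to 1$ limit of case (2) gives $\log p(t)=+\infty$; a rescaling is therefore mandatory. With $R_n=n/D_n$, I would consider $\tilde K_n(z)=K_n^{(N_n)}(z/R_n)$, whose normalized coefficients are $\binom{k+N_n}{k}/R_n^k$. Expanding $\log(N_n+tD_n)=\log N_n+tD_n/N_n+O((D_n/N_n)^2)$ and collecting terms carefully gives $\frac{1}{D_n}\log\binom{k+N_n}{k}=t\log R_n+t(1-\log t)+o(1)$; the term $t\log R_n$ is cancelled exactly by the contribution of $R_n^{-k}$, leaving $p(t)=e^t/t^t$ (with $p(0)=1$) on $[0,1]$. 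A short Legendre computation yields $I(s)=e^s$ for $s\leq 0$ and $I(s)=s+1$ for $s>0$, hence $I'(\log r)=r$ for $r<1$ and $1$ for $r\geq 1$, matching $\tilde\mu^K$ in~\eqref{fff}. The global statement $\frac{1}{D_n}\mu^K_{D_n}\to\delta_0$ is then immediate from $\mu_{D_n}^{\tilde K}=\mathcal S_{R_n}\mu_{D_n}^K$ together with $R_n\to\infty$, which forces the mass of $\mu_{D_n}^K$ into shrinking disks around the origin. The principal obstacle throughout is verifying the uniform convergence required by \textbf{A2}(3): the Stirling--entropy expansion degenerates near $t=0$, and in case (3) the near-cancellation between a blown-up $\log N_n$ factor and the rescaling $R_n^{-k}$ must be controlled to order $o(1)$ uniformly in $t\in[0,1]$.
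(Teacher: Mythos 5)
Your proposal is correct and follows essentially the same route as the paper: identify the profile function $p(t)$ for the (rescaled) coefficients via Stirling/entropy asymptotics, verify \textbf{A2}, apply Theorem~\ref{theorem2}, and compute the Legendre--Fenchel transform, with the same key rescaling by $R_n$ in case (3) whose $t\log R_n$ cancellation is exactly the paper's mechanism. The only cosmetic differences are that you take $L_n=D_n$ in all three cases (the paper uses $L_n=n$ in cases (1)--(2) and renormalizes by $1-a$ afterwards) and that you work with $\binom{k+N_n}{k}$ rather than the leading-coefficient-normalized $f_{k,n}$, which shifts $\log p$ by an additive constant and leaves $I'$, hence the limiting measure, unchanged.
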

 In the case  \textcircled{4} when $D_n$ remains to be a fixed number, we will show that the  measure of zeros of the rescaling polynomials $K_n^{(N_n)}(\frac z n)$   will converge to some random   measure.  The main tool to prove this result is the   Rouch\'e's theorem in complex analysis.  Our result is as follows, 
\begin{thm}\label{theorem4}
Suppose $\lim_{n\to\infty}\frac{N_n}{n}=1$ and $D_n=m$ for all $n$, then globally \begin{equation}\label{deddddffff} \frac 1m\mu_{ D_n } ^K\to \delta_0,\end{equation}where the convergence is in probability. Furthermore, we have the rescaling limit
\begin{equation}\label{ddddffff} \mathcal S_{n}(\mu_{D_n}^{K}) \to \mu_{f^K_m},\end{equation} where the convergence is   in distribution and $\mu_{f^K_m}$ is the random measure of zeros of the random polynomial
\begin{equation}\label{finallabel} f^K_m(z)=\sum_{k=0}^m\frac{\xi_k}{k!}z^k.\end{equation}
\end{thm}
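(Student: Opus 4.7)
The approach is to realise $K_n^{(N_n)}$, after rescaling by $n$, as a polynomial whose coefficients converge jointly in distribution to those of $f^K_m$, and then use Rouch\'e's theorem (as announced in the theorem statement) to upgrade coefficient convergence into convergence of zero measures. The global concentration \eqref{deddddffff} will follow from the same scaling picture as an immediate consequence.

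Differentiating term by term gives
\begin{equation*}
K_n^{(N_n)}(z)=\sum_{j=0}^m \xi_{j+N_n}\,\frac{(j+N_n)!}{j!}\,z^j,
\end{equation*}
so after dividing by $N_n!$ and rescaling,
\begin{equation*}
q_n(z):=\frac{K_n^{(N_n)}(z/n)}{N_n!}=\sum_{j=0}^m c_{j,n}\,z^j,\qquad c_{j,n}=\xi_{j+N_n}\,\frac{(N_n+1)(N_n+2)\cdots(N_n+j)}{j!\,n^j}.
\end{equation*}
Since $j\le m$ is fixed and $N_n/n\to 1$, the deterministic factor multiplying $\xi_{j+N_n}$ converges to $1/j!$. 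The i.i.d.\ assumption gives $(\xi_{N_n},\dots,\xi_{N_n+m})\stackrel{d}{=}(\xi_0,\dots,\xi_m)$, hence the coefficient vector $(c_{0,n},\dots,c_{m,n})$ converges in distribution to $(\xi_0,\xi_1,\xi_2/2!,\dots,\xi_m/m!)$, which is exactly the coefficient vector of $f^K_m$. Moreover the zero set of $q_n$ coincides with that of $K_n^{(N_n)}(z/n)$, so $\mathcal{S}_n(\mu^K_{D_n})=\mu_{q_n}$.

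To pass from coefficient convergence to zero-measure convergence, apply Skorokhod's representation to realise, on a common probability space, copies $\tilde q_n$ and $\tilde f^K_m$ with $\tilde q_n\to \tilde f^K_m$ almost surely coefficient-wise, hence uniformly on compact subsets of $\mathbb{C}$. Hypothesis \eqref{condition2} guarantees $\xi_m\ne 0$ a.s., so $\tilde f^K_m$ is a.s.\ a polynomial of exact degree $m$ with a finite zero multiset. Apply Rouch\'e's theorem on small disjoint disks around each distinct zero of $\tilde f^K_m$: on the boundary circles $|\tilde f^K_m|$ is bounded below by some positive $\delta$, so uniform convergence yields $|\tilde q_n-\tilde f^K_m|<\delta$ there for large $n$, forcing $\tilde q_n$ to have the same number of zeros (counted with multiplicity) inside each disk as $\tilde f^K_m$. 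Letting the radii shrink gives weak convergence $\mu_{\tilde q_n}\to \mu_{\tilde f^K_m}$ a.s., and therefore $\mathcal{S}_n(\mu^K_{D_n})=\mu_{q_n}\to \mu_{f^K_m}$ in distribution, which is \eqref{ddddffff}.

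For the global statement \eqref{deddddffff}, the zeros of $K_n^{(N_n)}$ are precisely the points $z_i^{(n)}/n$ where $z_i^{(n)}$ are the zeros of $q_n$. Since $q_n\to f^K_m$ in distribution and $f^K_m$ has a.s.\ $m$ finite zeros, the sequence $\max_i|z_i^{(n)}|$ is tight; hence $\PR(\max_i|z_i^{(n)}/n|>\varepsilon)\to 0$ for every $\varepsilon>0$, which yields $\frac1m\mu^K_{D_n}\to \delta_0$ in probability. The delicate step in the argument is the application of Rouch\'e, where one must correctly track the possibility of repeated roots of $f^K_m$; grouping coincident zeros into a single shrinking disk and counting with multiplicity sidesteps this issue, and the nondegeneracy of the leading coefficient $\xi_m/m!$ prevents any zero of $q_n$ from escaping to infinity.
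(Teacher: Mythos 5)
Your proposal is correct and follows essentially the same route as the paper: rescale by $n$, observe that the (suitably normalized) coefficients converge to those of $f^K_m$ after exploiting the i.i.d.\ structure to replace $\xi_{j+N_n}$ by $\xi_j$ in distribution, and upgrade coefficient convergence to convergence of zero measures via Rouch\'e's theorem applied to small disks around the zeros of the limit (the paper packages this last step as a standalone deterministic proposition and applies it pathwise, whereas you invoke Skorokhod representation --- an equivalent bookkeeping of the randomness). Your tightness argument for the global statement $\frac1m\mu^K_{D_n}\to\delta_0$ is also sound and in fact more detailed than the paper's one-line remark.
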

 
	The relationship between the results in part (3), Theorem 3 and Theorem 4 has
 an intuitive explanation. Consider the case in
 part (3), Theorem 3. We can zoom in zeros of  $K_n^{(N_n)}(z)$ in two steps. First we zoom in the zeros of $K_n^{(N_n)}(z)$ by a factor of $n$, then by Theorem 4 (treating $D_n$ as fixed for this moment) the scaled zeros will be close to the zeros of $f^K_{D_n}(z)$. Here $f^K_{D_n}(z)$ is just the function in \eqref{finallabel} with $m$ replaced by $D_n$.
 If we then zoom out
 zeros of $f^K_{D_n}$ by a factor of $D_n$(which is the degree of the polynomial $f^K_{D_n}$) then as a whole we get something close to zooming in the zeros of $K_n^{(N_n)}(z)$ by a factor of $\frac{n}{D_n}$. Taking $n$ to infinity we should get the the limit in part (3) of Theorem 3.
 This is in accordance with the fact that the expression \eqref{fff} is also the limit of the empirical measure of zeros of $f^K_{D_n}(D_nz)$ as $m\to \infty$, as shown in Theorem 2.3 of \cite{KZ}. Note that in the zooming out process, we can also replace $\sum_{k=0}^{D_n} \frac{\xi_k}{k!}(D_nz)^k$ by $\sum_{k=0}^{\infty} \frac{\xi_k}{k!}(D_nz)^k$ since Theorem 2.1 of \cite{KZ} shows the empirical measure of $\sum_{k=0}^{\infty} \frac{\xi_k}{k!}(D_nz)^k$ restricted to unit disk also converges to the measure in \eqref{fff}.
	
	%set $D_n$ to be fixed m and send n to $\infty$, then let m go to $\infty$, what we get should be the same as what we get when we directly send $D_n$ to $\infty$.

As a summary, we show that the clustering property of zeros around the unit circle for the derivatives of  Kac polynomials holds if and only if $N_n/n\to 0$; the conclusion (3) in Theorem \ref{theorem3} together with  Theorem \ref{theorem4} imply that, if $N_n/n\to 1$, zeros will converge to the origin with the average decay rate  $D_n/n$ which is the quotient of the degrees of $K_n^{(N_n)}$ and $K_n$. Until now we completely answer the questions we proposed at the beginning of the article for Kac polynomials.

\subsubsection{General random polynomials}
We can extend the above results for the Kac polynomials to the general random polynomials where the coefficients satisfy the assumptions \textbf{A1} in Kabluchko-Zaporozhets Theorem. 

\begin{thm}\label{theorem5}
 Suppose the random polynomial $p_n(z)$  \eqref{general} satisfies \textbf{A1} with some function $p(t)$, then regarding the zeros of $p_n^{(N_n)}$, we have, 
\begin{enumerate}
\item If $\lim_{n\to\infty} \frac{N_n}n=0$, let $I(s)$ be the Legendre-Fenchel transform of $-\log p$, then $\frac{1}{D_n}\mu_{D_n}$ converges in probability to a rotationally invariant measure $\mu$ given by $$\mu(\D_r)=I'(\log r), \quad r>0.$$That is, $\frac{1}{D_n}\mu_{D_n}$ has the same limit as $\frac{1}{n}\mu_{n}$.

\item If $\lim_{n\to\infty} \frac{N_n}n=a\in (0,1)$, let $\log u_a=\log p(t+a)+(t+a)\log (t+a)-t\log t+(1-a)\log (1-a)$ if $0\leq t\leq 1-a$ and $-\infty$ if $t>1-a$. Let $I_a(s)$ be the Legendre-Fenchel transform of $-\log u_a$, then $\frac{1}{D_n}\mu_{D_n}$ converges in probability to a rotationally invariant measure $\mu_a$ given by $$\mu_a(\D_r)=\frac{1}{1-a}I_a'(\log r), \,\,\,\, r>0.$$

%\item If $\lim_{n\to\infty} \frac{N_n}n=1$, then $\frac{1}{D_n}\mu_{D_n}$ converges in probability to $\delta_0$.

%\item If $a=1$, suppose in addition to Assumption 1 we have
%\begin{equation}\label{additional assm1}
 %\lim\limits_{n\to\infty}\sup_{n-D_n \leq k \leq n}\frac{n}{D_n}\abs{\abs{p_{k,n}}^{\frac{1}{n}}-p(\frac{k}{n}) )}=0
 %\end{equation}
%and $p(1)>0$, $\log p(x)$ is twice continuous differentiable in [0,1] (on the endpoint 0 or 1, the derivative is the right or left derivative.)

%let $\log u=\log p+x-1-x\log x$ if $0\leq x\leq 1$ and $-\infty$ if $x>1$. Let $I(s)$ be the Frenchel-Legendre transform of $-\log u$ and let $\widehat{q}_n(z)=q_n(\frac{D_nz}{n})$, then $\frac{1}{D_n}\mu_{\widehat{q}_n}$ converges in probability to a rotationally invariant measure $\mu$ given by
%$$\mu(D_r)=I'(\log r), r>0$$.

\end{enumerate}

\end{thm}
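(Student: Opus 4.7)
The plan is to apply Theorem \ref{theorem2} to a suitable renormalization of the derivative polynomial. Writing out the derivative,
\begin{equation*}
p_n^{(N_n)}(z) \;=\; \sum_{k=0}^{D_n} \xi_{k+N_n}\, p_{k+N_n,n}\, \frac{(k+N_n)!}{k!}\, z^k,
\end{equation*}
one sees that the factorial factor grows super-polynomially in $n$ when $a>0$, so I would first divide the polynomial by the nonzero constant $N_n!$; this preserves the zeros, and the new coefficients are $\tilde q_{k,n} = p_{k+N_n,n}\binom{k+N_n}{N_n}$.

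For case (2), I would invoke Theorem \ref{theorem2} with $L_n = n$, $T_0 = 1-a$, and $\delta_n = N_n/n - a$, so that $\delta_n \to 0$ and $(T_0-\delta_n)L_n = D_n$ exactly. For $t = k/L_n \in [0, 1-a]$ one has $(k+N_n)/n = t + N_n/n \to t+a$, so \textbf{A1} gives $\tfrac{1}{n}\log|p_{k+N_n,n}| \to \log p(t+a)$ uniformly in $k$, while a direct Stirling computation gives, after the divergent $\log n$ terms cancel exactly,
\begin{equation*}
\tfrac{1}{n}\log\binom{k+N_n}{N_n} \;\longrightarrow\; (t+a)\log(t+a) - t\log t - a\log a,
\end{equation*}
again uniformly in $k$. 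Thus $\tfrac{1}{L_n}\log|\tilde q_{k,n}|$ converges to a function that differs from $\log u_a(t)$ only by a constant independent of $t$; since adding a constant to $-\log u_a$ shifts its Legendre--Fenchel transform by the same constant without changing the derivative, Theorem \ref{theorem2} produces $\tfrac{1}{n}\mu_{D_n} \to \mu$ in probability with $\mu(\D_r) = I_a'(\log r)$. Multiplying by $n/D_n \to 1/(1-a)$ yields $\mu_a$ as claimed. Case (1) is the $a=0$ specialization: the limit function reduces to $\log p(t)$, so $I_0 = I$, and the prefactor $n/D_n \to 1$, recovering the statement.

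The main technical point is the uniform convergence of $\tfrac{1}{L_n}\log|\tilde q_{k,n}|$ and the verification of the continuity condition in \textbf{A2}. The Stirling remainder is $O(\log n) = o(L_n)$, giving uniform convergence away from $k=0$; the endpoint values are handled by direct computation (note that $\binom{N_n}{N_n}=1$ and the convention $0\log 0 = 0$ make the limit at $t=0$ trivially zero). At the right endpoint $T_0 = 1-a$, if $p(1)>0$ the limit function has a jump as $t$ crosses $1-a$, but this is accommodated by the ``$\wedge T_0$'' convention in condition 3 of \textbf{A2} together with the fact that $p_n^{(N_n)}$ has no higher-degree terms. Once these estimates are in place, the conclusion follows mechanically from Theorem \ref{theorem2} and from the observation that $I_a'$ does not depend on additive constants in $-\log u_a$.
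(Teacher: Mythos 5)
Your proposal is correct in substance and follows the same overall strategy as the paper: verify \textbf{A2} for a renormalized version of $p_n^{(N_n)}$ with $L_n=n$, $T_0=1-a$, $\delta_n=N_n/n-a$, apply Theorem \ref{theorem2}, and rescale by $n/D_n\to 1/(1-a)$. Three differences are worth recording. First, you normalize by $N_n!$ (coefficients $p_{k+N_n,n}\binom{k+N_n}{N_n}$), while the paper normalizes by $n!/D_n!$ (coefficients $p_{k+N_n,n}f_{k,n}$ with $f_{k,n}=\frac{(k+N_n)!D_n!}{k!n!}$, so that $f_{D_n,n}=1$); the two limit functions differ by the constant $-a\log a-(1-a)\log(1-a)=\lim_n\frac1n\log\binom{n}{D_n}$, and your observation that an additive constant shifts $I_a$ without changing $I_a'$ is exactly what reconciles your $u_a$ with the one in the statement. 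Second, you treat case (1) as the $a=0$ specialization of case (2); the paper treats it separately (via monotonicity of $f_{k,n}$ in $k$), and with good reason if one uses the paper's case-(2) estimates, which carry factors $1/a$ and $1/a^2$ and degenerate as $a\to0$. Your direct Stirling route does survive at $a=0$ because it only needs uniform continuity of $x\log x$ near $0$, but this should be said explicitly rather than asserted. Third, and this is the one genuine imprecision: you claim \textbf{A1} gives $\frac1n\log\abs{p_{k+N_n,n}}\to\log p(t+a)$ uniformly in $k$. \textbf{A1} controls $\abs{p_{k,n}}^{1/n}-p(k/n)$, not its logarithm, and the two are not equivalent where $p$ is small; if $p(1)=0$ the logarithmic version can fail for $k$ near $n$. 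The repair is routine and is what the paper does: verify condition 3 of \textbf{A2} multiplicatively, bounding $\abs{\abs{u_{k,n}}^{1/n}-p(t+a)f(t)}$ by $f_{k,n}^{1/n}\abs{\abs{p_{k+N_n,n}}^{1/n}-p(\cdot)}+p(\cdot)\abs{f_{k,n}^{1/n}-f(t)}$, and using that the factorial factor is bounded above and below on $[0,1-a]$ so that its logarithmic and multiplicative convergence are equivalent.
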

Compared with Theorem \ref{theorem3} and Theorem \ref{theorem4} for the Kac case,  things become complicated for the general random polynomials when the ratio $N_n/n$ tends to $1$. First, one can not conclude that $\frac{1}{D_n}\mu_{D_n}$ converges in probability to $\delta_0$. To see this, let's consider the following example where the coefficients of the random polynomials $p_n$ are 
\[p_{k,n}=\begin{cases}
1 &0\leq k< N_n,\\
\frac{n!(k-N_n)! }{k! D_n! } &N_n\leq k\leq n,
\end{cases}
\]
where  $$D_n=\left\lfloor \log n \right\rfloor\,\,\mbox{and}\,\, N_n=n-D_n.$$
We let $$p(t)=1_{0\leq t\leq 1}.$$ We claim that $p_{k,n}$ and $p$ satisfy the assumptions \textbf{A1}.  Indeed, when $0\leq k <N_n$, we have 
$$p^{\frac{1}{n}}_{k,n}=p(\frac{k}{n}). $$
Therefore, it remains to prove
\begin{equation*}
\lim\limits_{n\to\infty} \sup_{N_n\leq k\leq n}\abs{p^{\frac{1}{n}}_{k,n}-1}=0.
\end{equation*}
By \eqref{assumption 2 point 3}, it's enough to show
\begin{equation}\label{pkn}
\lim\limits_{n\to\infty} \sup_{N_n\leq k\leq n}\abs{\frac{1}{n}\log p^{}_{k,n}}=0.
\end{equation}
For $N_n\leq k\leq n$, we have $1\leq \frac{n!(k-N_n)!}{k!D_n!} \leq \frac{n!}{k!}$, then 
\begin{align*}
\sup_{N_n\leq k\leq n}\abs{\frac{1}{n}\log p_{k,n}} \leq \sup_{N_n\leq k\leq n}\frac{1}{n} \log \frac{n!}{k!}
\leq \frac{1}{n}\log n^{D_n}
\leq \frac{\log^2 n}{n}, 
\end{align*}
	where \eqref{pkn} follows as $n\to\infty$, which completes the proof of the claim.  But the $N_n$-th derivative of $p_n$ is
$$p_{n}^{(N_n)}=\frac{n!}{D_n!} \sum_{k=0}^{ D_n}\xi_{k+N_n} z^k, $$
which is in the form of Kac polynomials, thus the empirical measure of zeros will converge to the uniform probability measure on the circle instead of the delta function at the origin.

%Even if the zeros of derivatives tends to the origin, Now let's consider the random polynomials $p_{ n}$ in the form of \eqref{general} with the coefficients defined by
%\[p_{k,n}=\begin{cases}
%1,  & 0\leq k <n\\ n, &k=n 
%\end{cases}
%\]%
%We can verify $p_{k,n}$ satisfies assumptions  \textbf{A1} with $p(t)=1$. % Let $$p_n=\sum_{k=0}^n\xi_k p_{k,n}z^k.$$ 
%Differentiating the random polynomials $n-1$ times, we get the linear function $$p_{n}^{(n-1)}=nn!\xi_nz+(n-1)!\xi_{n-1}.$$ Hence, we need a rescaling factor of of order $n^2$ to get a nontrivial limit, which indicates that assumption \textbf{A1} in Kabluchko-Zaporozhets Theorem is not enough to extract complete information about the convergence of the rescaling zeros of the $(n-D_n)$-th derivative.

Secondly, even if zeros converge to $\delta_0$, one can not easily find the rescaling limit of the empirical measure of zeros if there exists one. The rescaling property should highly depend on the properties of coefficients, such as the convergent rate of $p_{n,k}$ to $p(t)$ and the monotonicity of $p_{k,n}$ for each fixed $n$. The following results regarding the elliptic polynomials provide such an example.  
\subsubsection{Random elliptic polynomials}
The random elliptic polynomials are in the form of \begin{equation}\label{elliptic}
  E_n(z)=\sum_{k=0}^n \xi_k\sqrt{n\choose k} z^k.
 \end{equation}   
If  $\xi_k$ are i.i.d. complex Gaussian random variables, then the random elliptic polynomials are also called Gaussian SU(2) polynomials. The Gaussian SU(2) polynomials  can be viewed as   meromorphic functions defined on the complex projective space $\mathbb{CP}^1\cong S^2$ and a basic fact is that the distribution of its zeros is invariant under the SU(2) action. The Gaussian SU(2) polynomial is the standard model when one tries to  generalize the random polynomials to random holomorphic sections on the complex manifolds \cite{BSZ1, HKPV}.

One can show that the coefficients of the random elliptic polynomials satisfy all assumptions in \textbf{A1} with the associated function    (see also \cite{KZ})\begin{equation}\label{pe}
 \log p^E(t)=-\frac{1}{2}t\log t-\frac{1}{2}(1-t)\log (1-t)\,\,\,\, \mbox{for}\,\,\,\, 0\leq t\leq 1.
\end{equation}

\begin{thm}\label{theorem6}
For the random elliptic polynomials $E_n(z)$ defined in \eqref{elliptic},  we have 
\begin{enumerate}
\item The conclusions   in Theorem \ref{theorem5} hold for $\frac{1}{D_n}\mu^E_{D_n}$ with $p$ replaced by $p^E$ defined in \eqref{pe}. 

\item If $\lim_{n\to\infty} \frac{N_n}n=1$, then we have the following global convergence in probability $$\frac 1{D_n}\mu_{D_n}^{E} \to \delta_0. $$ Furthermore,  if  $D_n\to \infty$, then in probability, we have  $$\frac{1}{D_n} \mathcal S_{\sqrt{R_n}}( \mu^E_{D_n}) \to \mu,$$
where  $R_n=\frac{n}{D_n}$ as before and  $\mu$ is the rotationally invariant probability measure defined as
\begin{equation}\label{equationdddd}
\mu (\D_r)=\frac{r(\sqrt{4+r^2}-r)}{2},\,\,\,\,\, r\in (0,\infty).
\end{equation} 
If $D_n=m<\infty$, then the following rescaling limit holds in distribution
 $$\mathcal S_{\sqrt{n}}( \mu^E_{D_n}) \to \mu _{f^E_m},$$  
where $\mu_{f^E_m}$ is the random measure of zeros of 
$f^E_m=\sum_{k=0}^{m}\frac{\xi_k}{k! \sqrt{(m-k)!}} z^k.$

%\item If $\lim_{n\to\infty} \frac{N_n}n=1$, then $\frac{1}{D_n}\mu_{D_n}$ converges in probability to $\delta_0$.

%\item If $a=1$, suppose in addition to Assumption 1 we have
%\begin{equation}\label{additional assm1}
 %\lim\limits_{n\to\infty}\sup_{n-D_n \leq k \leq n}\frac{n}{D_n}\abs{\abs{p_{k,n}}^{\frac{1}{n}}-p(\frac{k}{n}) )}=0
 %\end{equation}
%and $p(1)>0$, $\log p(x)$ is twice continuous differentiable in [0,1] (on the endpoint 0 or 1, the derivative is the right or left derivative.)

%let $\log u=\log p+x-1-x\log x$ if $0\leq x\leq 1$ and $-\infty$ if $x>1$. Let $I(s)$ be the Frenchel-Legendre transform of $-\log u$ and let $\widehat{q}_n(z)=q_n(\frac{D_nz}{n})$, then $\frac{1}{D_n}\mu_{\widehat{q}_n}$ converges in probability to a rotationally invariant measure $\mu$ given by
%$$\mu(D_r)=I'(\log r), r>0$$.

\end{enumerate}

\end{thm}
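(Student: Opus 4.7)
The strategy is to treat the three asserted limits of Theorem~\ref{theorem6} separately. Part~(1) is immediate: the coefficients $\sqrt{\binom{n}{k}}$ of the elliptic polynomial satisfy the hypothesis \textbf{A1} with the function $p^E$ recorded in \eqref{pe}, so Theorem~\ref{theorem5} applies after substituting $p^E$ for $p$. Once the two rescaling limits in part~(2) are established, the global statement $\frac{1}{D_n}\mu_{D_n}^E\to\delta_0$ will follow by a standard bump-function argument: pick a smooth $\phi$ supported in $\D_R$ and equal to $1$ on $\D_{R/2}$, apply the rescaling convergence to read off $\frac{1}{D_n}\mu_{D_n}^E(\D_{R/\sqrt{R_n}})\to \mu(\D_R)$, and send $R\to\infty$ using that the limiting $\mu$ (or $\mu_{f_m^E}$, in the fixed-degree case) is a probability measure while $\sqrt{R_n}\to\infty$.

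For the rescaling when $D_n\to\infty$, the coefficient of $z^j$ in $E_n^{(N_n)}(z/\sqrt{R_n})$ is
\[c_{j,n}=\sqrt{\tbinom{n}{j+N_n}}\,\frac{(j+N_n)!}{j!}\,(D_n/n)^{j/2},\qquad 0\le j\le D_n.\]
I would apply Stirling to $\log c_{j,n}^2$ and collect all terms depending only on $n$ (chiefly $N_n\log n$, $D_n\log D_n$, and $N_n$) into a single deterministic constant $\beta_n$, which can be divided out of the polynomial without affecting its zeros. The remaining $t=j/D_n$-dependent piece should simplify, uniformly in $t\in[0,1]$, to
\[\frac{1}{D_n}\log(c_{j,n}/\beta_n)=\underbrace{-t\log t-\tfrac{1-t}{2}\log(1-t)+\tfrac{t}{2}}_{=:\log p(t)}+o(1).\]
This is precisely the form of the estimate \eqref{assumption 2 point 3}, so Theorem~\ref{theorem2} applied with $L_n=D_n$, $T_0=1$ and $\delta_n=0$ produces a rotationally invariant limit $\mu(\D_r)=I'(\log r)$. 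Stationarity in $t$ for the Legendre transform of $-\log p$ gives $e^s=t/\sqrt{1-t}$, equivalently $t^2+r^2 t-r^2=0$ at $r=e^s$, whose positive root is $t=r(\sqrt{r^2+4}-r)/2$, matching \eqref{equationdddd}.

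For the fixed-degree rescaling $D_n=m$, the argument mirrors Theorem~\ref{theorem4}. The coefficient of $z^j$ in $E_n^{(N_n)}(z/\sqrt n)$ is $\sqrt{\binom{n}{j+N_n}}(j+N_n)!/(j!\,n^{j/2})$, and writing $(j+N_n)!=n!/\prod_{i=1}^{m-j}(N_n+j+i)$ with each factor $\sim n$, an elementary asymptotic shows that after dividing by a single deterministic normalizer $\gamma_n$ independent of $j$ this coefficient tends to $1/(j!\sqrt{(m-j)!})$ for each fixed $j\in\{0,\dots,m\}$. Since there are only $m+1$ coefficients and $\xi_{N_n+j}$ has the same law as $\xi_j$, the rescaled polynomial divided by $\gamma_n$ converges in distribution, uniformly on compact sets, to $f_m^E$. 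Hurwitz's theorem (applied in the same way Rouch\'e is used in the Kac case of Theorem~\ref{theorem4}) then transfers this convergence to the zero measures.

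The main obstacle is the uniform Stirling estimate in the $D_n\to\infty$ case: since $D_n$ can be as small as $\log n$, one must verify that the error in $\frac{1}{D_n}\log c_{j,n}$ is $o(1)$ simultaneously for $j$ near $0$ (where $j+N_n\sim n$ and $(D_n-j)\log(D_n-j)$ is delicate) and for $j$ near $D_n$ (where $D_n-j$ may be tiny but $j+N_n$ is still close to $n$). Controlling the $O(D_n^2/n)$ and logarithmic Stirling remainders uniformly in $t=j/D_n$ is routine but the only real calculation in the proof.
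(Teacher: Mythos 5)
Your proposal is correct and follows essentially the same route as the paper: Stirling asymptotics for the rescaled coefficients to verify \textbf{A2} with $L_n=D_n$, $T_0=1$, $\delta_n=0$ (your limit function differs from the paper's $\log\tilde u^E$ only by an additive constant, which does not affect $I'$ and hence not the limiting measure), the Legendre--Fenchel transform yielding \eqref{equationdddd}, and a Rouch\'e/Hurwitz argument via Proposition \ref{prop 2} for the fixed-degree case. The only cosmetic difference is that the paper factors the coefficient as $\tilde p^E_{k+N_n,n}\tilde f_{k,n}$ so as to reuse the Case \textcircled{3} Kac estimate, whereas you compute the full coefficient directly; the uniform Stirling control you flag is exactly the calculation the paper carries out.
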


\subsection{Further remarks}
Let's compare Theorem \ref{theorem6} with the part (3) of Theorem \ref{theorem3} and Theorem \ref{theorem4} for the case when $N_n/n\to 1$. Both the  empirical measures of zeros of derivatives tend to the point mass at the origin, but the interesting result is that they converge with different decay rate. Zeros converge to the origin with the average decay rate  $D_n/n$ for the Kac case and $\sqrt{D_n/n}$ for the elliptic case, which indicates that the assumptions \textbf{A1} is not enough to extract the complete information about the convergence of zeros of the $N_n$-th derivative of general random polynomials, i.e., the main assumption  $\lim\limits_{n\to\infty}\sup_{k\in[0,An]} \abs{\abs{p_{k,n}}^{\frac{1}{n}}-p(\frac{k}{n})  }=0$
       for every $A>0$ is not enough. It seems that we need to impose additional assumptions on the rate of the convergence of  $p_{k,n}$ to $p$ for $N_n \leq k \leq n$ and the growth order of $p_{k,n}$. As in \eqref{assumption 2 point 3}, we may alternatively consider the quantities
\begin{equation}\label{additional assm1}
\eta_n:= \sup_{N_n \leq k \leq n}\abs{\frac 1n \log \abs{p_{k,n}}^{}-\log p(\frac{k}{n}) }
 \end{equation}
 and \begin{equation}\label{additional assm2}
b_n:= \sup_{N_n \leq k \leq n} \abs{p_{k,n}}.
 \end{equation}
The asymptotic properties of  $\eta_n$ and $b_n$ may play important roles in the case when $N_n/n\to 1$. Note that $\eta_n$ is identical to $0$ for the Kac polynomials and   asymptotic to $\frac {\log D_n}{4 n}+O(\frac 1n)$   for the random elliptic polynomials.  Two  questions are raised:  what are the  asymptotic properties of $\eta_n$ and $b_n$ so that zeros of $p_n^{(N_n)}$ tend to the origin; if zeros tend to the origin, how does the decay rate depend on  $\eta_n$ and $b_n$. We postpone these two problems for  further investigation. 

The paper is organized as follows. We will prove Theorem \ref{theorem3} and Theorem \ref{theorem4} for the Kac polynomials in great details in \S\ref{Kac}. The estimates for the Kac case can be applied to prove Theorem \ref{theorem5} for the general random polynomials in \S\ref{generalpolynomials}. In the end, we will prove Theorem \ref{theorem6} for the random elliptic polynomials. In  Appendix \ref{proofoftheorem2}, we will sketch the proof of Theorem \ref{theorem2}. \\

 \section{Kac polynomials}\label{Kac}
 In this section, we will prove Theorem \ref{theorem3} and Theorem \ref{theorem4} for the Kac polynomials. 
 
Let $K_{n}^{(N_n)}$ be the $N_n$-th derivative of the Kac polynomials.  
%\begin{equation}\label{def of the derivative poly}
% K_{n}^{(N_n)}(z)=\sum_{k=0}^{D_n} \xi_{k+N_n} (k+1)\cdot (k+2) \cdots (k+N_n)z^k.
%\end{equation}
Since we want to prove the  empirical measure of zeros converges to a deterministic limit, it suffices to prove the convergence in distribution.  
By the fact that $\xi_k$ are i.i.d., it's equivalent to consider
\begin{equation}
   K_{n}^{(N_n)}(z)=\sum_{k=0}^{D_n} \xi_k (k+1)\cdots \cdot (k+N_n)z^k.
\end{equation}
Observing that the random measure of zeros is invariant by the dilation, i.e., $\mu_{cf}=\mu_f$ for any nonzero $c$, we can alternatively consider the following normalized random polynomial so that the leading order term is $\xi_{D_n}z^{D_n}$, 
\begin{equation}\label{def of fn}
 K_{n}^{(N_n)}(z)=\sum_{k=0}^{D_n} \xi_k f_{k,n} z^k,
\end{equation}
where throughout the article, we set
\begin{equation}\label{def of fk,n}
f_{k,n}:=\frac{(k+N_n)!D_n!}{k!n!}.
\end{equation}
The Stirling's formula reads
\begin{equation}
k!=c_k\sqrt{2\pi k}(\frac{k}{e})^k,
\end{equation}
where $c_k$ is a sequence of positive numbers tending to $1$ as $k$ tends to $\infty$ and hence uniformly bounded. 
Then we have
\begin{align}\label{key}
\nonumber &\frac{1}{L_n}\log f_{k,n}\\\nonumber &=\frac{1}{L_n}[(k+N_n)\log (k+N_n)-(k+N_n)+\frac{\log (k+N_n)}{2}+D_n\log D_n-D_n+\frac{\log D_n}2 )\\
\nonumber &-(k\log k-k+\frac{\log k}{2}+n\log n-n+\frac{\log n}{2} )]+\frac{1}{L_n}(\log c_{k+N_n}+\log c_{D_n}-\log c_k-\log c_n )\\
\nonumber &=\frac{1}{L_n} [ (k+N_n)\log (k+N_n)+D_n\log D_n-n\log n-k\log k ]\\
\nonumber &+\frac{1}{2L_n} (\log (k+N_n)+\log D_n-\log n-\log k)\\\nonumber &+
\frac{1}{L_n}(\log c_{k+N_n}+\log c_{D_n}-\log c_k-\log c_n )\\
&:=I_1(k,n)+I_2(k,n)+I_3(k,n).
\end{align}
When $k=0$, we set $c_k=1$ and set $I_1(0,n)=\frac{1}{L_n}(N_n\log N_n+D_n\log D_n-n\log n)$, $I_2(0,n)=
\frac{1}{2L_n} (\log N_n+\log D_n-\log n)$ and $I_3(0,n)=\frac{1}{L_n}(\log c_{N_n}+\log c_{D_n}-\log c_n )$ to consist with the definitions. The expressions of $I_j$ are different according to the choices of $L_n$ (only differ by the front factor $L_n$), but we use the same notation $I_j$ for different cases throughout the article to reduce the notations we use. 

In the following computations,  we will let $L_n\to\infty$ (although we choose different $L_n$ for different cases), hence $I_3(k,n)$ will tend to $0$ uniformly by the uniform bound of $c_k$, which means the third term $I_3(k,n)$ is always negligible.  
\subsection{Case \textcircled 1}
Let's first consider the case \textcircled 1 when
\begin{equation}\label{case 1}
\lim\limits_{n\to\infty}\frac{N_n}{n} =0.
\end{equation}
For this case, we need to choose $L_n=n$ in \eqref{key}. We first simply have
 \begin{equation}\label{limit of I2}
  \lim\limits_{n\to\infty} \sup_{0\leq k \leq D_n} \abs{I_2(k,n)}\leq  \lim\limits_{n\to\infty} \frac{2}{n}\log n =0.
\end{equation}
%We observe that the estimate \eqref{limit of I2} holds as long as $L_n$ is bounded below by $cn$ for some positive constant c.
For $I_1(k,n)$, we observe that for each fixed $n$, $I_1(k,n)$ is increasing with respect to $k$ by considering the function  $I(x)=(x+N_n )\log(x+N_n )-x\log x$ where $I'(x)=\log (x+N_n )-\log x\geq 0$. We combine this with the fact that $I_1(D_n,n)=\frac{1}{n}((D_n+N_n)\log(D_n+N_n)+D_n\log D_n-n\log n-D_n\log D_n)=0$, we first have
\begin{align*}
 &\sup_{0\leq k \leq D_n}\abs{I_1(k,n)}\leq \abs{I_1(0,n)}\vee \abs{I_1(D_n,n)} =\abs{I_1(0,n)},
  \end{align*}
which further reads 
  \begin{align*}
 \sup_{0\leq k \leq D_n}\abs{I_1(k,n)}&\leq \frac{1}{n}|n\log n-N_n\log N_n-D_n\log  D_n|\\
  &= \frac{1}{n}|N_n\log n+D_n\log n-N_n\log N_n-D_n\log D_n|\\
  &=|-\frac{N_n}{n}\log (\frac{N_n}{n})-\frac{D_n}{n}\log (\frac{D_n}{n})|.
  \end{align*}
Thus, we have
\begin{equation}\label{limit of I1}
  \lim\limits_{n\to\infty} \sup_{0\leq k \leq D_n} \abs{I_1(k,n)}=0,
\end{equation}
since $\frac{N_n}{n}\to 0$ and $\frac{D_n}{n}=1-\frac{N_n}{n}\to 1$ as $n\to \infty$.

Combing \eqref{limit of I2}\eqref{limit of I1} and the fact that $I_3$ always tends to $0$, we get
\begin{equation}\label{i1 in case 1}
\lim\limits_{n\to\infty} \sup_{0\leq k\leq D_n}\abs{\frac{1}{n}\log f_{k,n}}=0.
\end{equation}
Hence, the coefficients $f_{k,n}$ satisfy \textbf{A2}  with $L_n=n$, $T_0=1$, $\delta_n=\frac{N_n}{n}$ so that $(1-\delta_n)L_n=D_n$  and $\log f(t)=0$ for $0\leq t \leq 1$ and
$\log f=-\infty $ for $t>1$. Therefore,   zeros of  $K_{n}^{(N_n)}$ will have the same distribution as the  Kac polynomials by computations in \eqref{examplekac1} and \eqref{examplekac} as $n\to \infty$.
%So we have conditions of Theorem 1 satisfied with $\log f(x)=0$ for $0\leq x \leq 1$ and
%$\log f=-\infty $ for $x>1$. We can compute $I(s)=\sup_{0\leq t \leq 1}(st)=\max\{s,0\}$. $I'(s)=0$ or 1 according to $s>0$ or $s\leq 0$. By theorem 2 the limiting distribution of $\frac{1}{n}\mu_{f_n}$ (and also $\frac{1}{D_n}\mu_{f_n}$) is the uniform distribution on the unit circle, i.e.
%\[ \mu(D_r)=\begin{cases}
%0 &0<r<1 \\ 1 & r \geq 1
%\end{cases}
%\]

\subsection{Case \textcircled 2}\label{casecase2}
Let's consider the case when
\begin{equation}
\lim\limits_{n\to \infty}\frac{N_n}{n}=a \in (0,1).
\end{equation}
Let's choose $L_n=n$ in \eqref{key} again.  By the same arguments as in Case \textcircled 1,   $I_2$ and $I_3$ converge to $0$ uniformly for $0\leq k\leq D_n$  as
 $n\to \infty$.  Therefore it remains to estimate $I_1$.  Let's put $ \frac{N_n}{n} =a+\delta_n$ where $\delta_n\to0$. Assume $n$ is large enough so that  
\begin{equation}\label{small delta}
\abs{\delta_n} \leq \frac{1-a}{2}\wedge \frac{a}{2}.
\end{equation}
%It follows that for $n>N_0$,
%$$D_n\geq \frac{1-a}{2}n,$$ then 
For $k\geq 1$, we rewrite
\begin{align}\label{bound i1 in case 2}
  \nonumber I_1 &=\frac{1}{n} [(k+N_n )\log (k+N_n )+D_n\log D_n-n\log n-k\log k ]\\
 \nonumber &=\frac{1}{n}  [(n-D_n+k)\log(k+N_n ) -n\log n-k\log k+D_n\log k-D_n\log k+D_n\log D_n]\\
  \nonumber &= \frac{1}{n} [n\log (\frac{k}{n}+\frac{N_n}{n})+(k-D_n)\log (k+N_n)-(k-D_n)\log k+D_n\log (\frac{D_n}{k}) ]\\
  \nonumber &= \log (\frac{k}{n}+\frac{N_n}{n})+[(\frac{k}{n}-\frac{D_n}{n})\log (1+\frac{N_n}{k})+\frac{D_n}{n}
  \log (\frac{D_n}{k})] \\
   \nonumber &:= I_4+I_5.
\end{align}
To estimate $I_4$ and $I_5$, we will make use of the following inequality which is the direct consequence of the intermediate value theorem
\begin{equation}\label{control log}
0\leq \log y-\log x\leq \frac{1}{c}(y-x)\quad\mbox{for}\,\,\,0<c\leq x\leq y. 
\end{equation}
We can rewrite $I_4$ as $\log (\frac{k}{n}+a+\delta_n)$, by \eqref{small delta}\eqref{control log},  we have
$$\abs{I_4-\log (\frac{k}{n}+a)}\leq \abs{\frac{2\delta_n}{a}}$$ for all $1\leq k\leq D_n.$
So we have
\begin{equation}\label{boudn i4}
\lim\limits_{n\to \infty} \sup_{1\leq k\leq  D_n}\abs{I_4-\log (\frac{k}{n}+a)}=0.
\end{equation}
For $I_5$, since $\frac{N_n}{n}=a+\delta_n$ and $\frac{D_n}{n}=1-a-\delta_n$, we can rewrite it as
\begin{align*}
  I_5 &= (\frac{k}{n} -(1-a)+\delta_n)\log (1+\frac{(a+\delta_n)n}k)+(1-a-\delta_n)\log \frac{(1-a-\delta_n)n}{k} \\
   &= \frac{k}{n}\log(1+a\frac{n}{k} +\delta_n\frac{n}{k})+(1-a-\delta_n)\log(-1+\frac{n+k}{k+(a+\delta_n)n}).
\end{align*}
Then we have
\begin{align}\label{bound i5}
  \nonumber  &\,\,\,\,\,\, \abs{I_5-[\frac{k}{n}\log(1+a\frac{n}{k})+(1-a)\log(-1+\frac{n+k}{k+an}) ]} \\
  \nonumber  &\leq \frac{k}{n}\abs{\log(1+a\frac{n}{k}+\delta_n \frac{n}{k})-\log(1+a\frac{n}{k})}\\
  \nonumber &+(1-a)
  \abs{\log(-1+\frac{n+k}{k+(a+\delta_n)n})-\log(-1+\frac{n+k}{k+an})}
  \\
  \nonumber &+
  \abs{\delta_n}\abs{\log(-1+\frac{n+k}{k+(a+\delta_n)n})}\\
  \nonumber  &:=I_6+I_7+I_8.
\end{align}
By  \eqref{small delta}\eqref{control log} again,  we have
$$
 {I_6}\leq \frac{k}{n} \frac{1}{1+\frac{a}{2}\frac{n}{k}}\abs{\delta_n}\frac{n}{k}\leq \abs{\delta_n}\to 0.
$$
For $I_7$, since $\abs{\delta_n} \leq \frac{1-a}{2}$, we know $k+(a+\delta_n)n \leq k+\frac{1+a}{2}n$. Therefore, \begin{equation}\label{tititittiti}
	-1+\frac{n+k}{k+(a+\delta_n)n} \geq-1+\frac{n+k}{k+\frac{1+a}{2}n} = \frac{\frac{(1-a)n}{2}}{\frac{1+a}{2}n+k}\geq \frac{\frac{(1-a)n}{2}}{\frac{1+a}{2}n+n }= \frac{1-a}{3+a}.
\end{equation}We also have
$$
-1+\frac{k+n}{k+an}\geq \frac{1-a}{3+a}.
$$
Thus,   by \eqref{control log}, we have 
\begin{align*}
  I_7 &=(1-a)
  \abs{\log(-1+\frac{n+k}{k+(a+\delta_n)n})-\log(-1+\frac{n+k}{k+an})}  \\
   &  \leq (1-a) \frac{3+a}{1-a} \abs{ \frac{k+n}{k+(a+\delta_n)n}-\frac{k+n}{k+an}   }\\
   & \leq (3+a) \frac{(k+n)\abs{\delta_n} n}{(k+\frac{an}{2})^2}\leq (3+a) \frac{(n+n)n\abs{\delta_n}}{(\frac{an}{2})^2} \leq \frac{8(3+a) \abs{\delta_n}}{a^2 }\to 0.
  \end{align*}
For $I_8$, taking into account \eqref{small delta}\eqref{tititittiti}, we have
$$\frac{1-a}{3+a}\leq  -1+\frac{k+n}{k+(a+\delta_n)n}=\frac{(1-a-\delta_n)n}{k+(a+\delta_n)n}\leq \frac{[(1-a)+\frac{1-a}{2}]n}{(a-\frac{a}{2})n}  \leq \frac{3(1-a)}{a}, $$
it follows that
$$
 {I_8}\leq  (\abs{\log(\frac{3(1-a)}{a})}\vee \abs{\log (\frac{1-a}{3+a} )}) \abs{\delta_n}\to 0.
$$
If we combine the estimates of $I_6$, $I_7$ and $I_8$, we conclude that 
\begin{equation}\label{bound i5}
 \lim_{n\to\infty}\sup_{1\leq k\leq D_n} \abs{I_5-[\frac{k}{n}\log(1+a\frac{n}{k})+(1-a)\log(-1+\frac{n+k}{k+an}) ]} = 0
  \end{equation}
If we set
\begin{align}\label{ffffffff}
\nonumber \log f_1(t)&=\log(t+a)+t\log (1+\frac{a}{t})+(1-a)\log(\frac{1-a}{t+a})\\
 &=(t+a)\log (t+a)-t\log t+(1-a)\log (1-a),\,\, t>0,
\end{align}
 then the estimates \eqref{boudn i4}\eqref{bound i5} for $I_4$ and $I_5$ imply
\begin{equation}\label{control i1 in case 2}
  \lim\limits_{n\to\infty}\sup_{1\leq k\leq D_n}\abs{I_1-\log f_1(\frac{k}{n})}=0.
\end{equation}
The estimate of $I_1$ in the case $k=0$ can be achieved by the same way and actually \eqref{control i1 in case 2} holds with the supreme taken over $0\leq k\leq D_n$.

Let's set $f=f_1$ for $0\leq t\leq 1-a$ and $0$ for $t>1-a$. Let's set $$\Delta(b)=\sup_{1-a\leq t\leq s\leq 1, s-t\leq b}\abs{\log f_1(t)-\log f_1(s)},$$ then we have
 $$\sup_{0\leq k\leq D_n}\abs{I_1-\log f(\frac{k}{n}\wedge (1-a))}\leq \sup_{0 \leq k\leq D_n}\abs{I_1-\log f_1(\frac{k}{n})} +\Delta(\abs{\delta_n}).  $$
 Observing that $\log f_1$ is uniformly continuous on $[1-a,1]$,  combining \eqref{control i1 in case 2} and the fact that $\delta_n\to0
 $, then we  have
$$
  \lim\limits_{n\to\infty}\sup_{0\leq k\leq D_n}\abs{I_1-\log f(\frac{k}{n}\wedge(1-a) )}=0.
$$ Therefore, if we combine the estimates of $I_1$, $I_2$ and $I_3$ we derived above for Case \textcircled 2, 
we have
 \begin{equation} \label{control i1 in case 2 final}
   \lim\limits_{n\to\infty}\sup_{0\leq k\leq D_n}\abs{\frac 1n f_{k,n}-\log f(\frac{k}{n}\wedge(1-a) )}=0.
 \end{equation}
 As a summary, in the case when $N_n/n\to a\in (0,1)$, by defining $f(t)$ above, the coefficients $f_{k,n}$ will satisfy \textbf{A2} with $T_0=1-a, L_n=n, \delta_n=\frac{N_n}{n}-a$ (note that $D_n=(T_0-\delta_n)L_n$ again). The  Legendre-Fenchel  transform of $-\log f$ is
\[ I(s)=\begin{cases}
     a\log (\frac{a}{e^{-s}-1}+a)+(1-a)\log (1-a) & s<\log (1-a)  ,   \\  s(1-a)& s\geq \log (1-a).
   \end{cases}
\]
Therefore, by Theorem \ref{theorem2},  the limiting measure  for the sequence of the random measure $\frac{1}{L_n}\mu_{D_n}^K$ (which is $\frac{1}{n}\mu_{D_n}^K$) satisfies
\[ \widehat{\mu}(\D_r)=\begin{cases}
\frac{ar}{1-r}&0<r<1-a, \\ 1-a & r\geq 1-a.
\end{cases}
\]
Since $D_n/n\to 1-a$, thus the limit of the empirical measure $\frac{1}{D_n}\mu_{D_n}^K$ will be $\frac1{1-a}\widehat{\mu}(\D_r) $ which is \eqref{ddddd}.

%Thus by Theorem \ref{theorem2}, 
%$\frac{1}{n}\mu_{f_n}$ will tends to a limit denoted as $\widehat{\mu}$. Since $\frac{D_n}{n}\to 1-a$, then $\frac{1}{D_n}\mu_{f_n}$ converges in distribution in $\frac{1}{1-a}\widehat{\mu}$.
%\[I(s)=\left\{
 % \begin{array}{ll}
  %  s(1-a)-(1-a)\log (1-a)+C_a, & s\geq \log (1-a) ;\\
   % a\log (\frac{a}{e^{-s}-1}+a)+C_a, & s<\log (1-a).
  %\end{array}
%\right. \] %%writing a piecewise function

\subsection{Case \textcircled 3}\label{case333}
In the case when
\begin{equation}\label{case 343}
  \lim\limits_{n\to\infty}\frac{N_n}{n}=1\,\,\,\mbox{and}\,\,\, D_n\to\infty,
\end{equation}
 we only prove \eqref{fff} which   implies \eqref{fdff}. To prove \eqref{fff}, 
%$$\log C_n=\log n-\log D_n,$$or
 %$$C_n.$$
we need to consider 
\begin{equation}\tilde{K}_n(z):=R_n^{D_n}K_n^{(N_n)}(\frac{z}{R_n})=\sum_{ k=0}^{D_n}\xi_k\tilde{f}_{k,n}z^k,\end{equation}
where
\begin{equation}\label{def of hat fkn}
\tilde{f}_{k,n}=f_{k,n}R_n^{D_n-k}\,\,\,\mbox{and}\,\,\, R_n =\frac{n}{D_n}.
\end{equation}
It's enough to study $\tilde K_n(z)$ since it has the same zeros as  $K_n^{(N_n)}(\frac{z}{R_n})$.

In this case, we need to choose $L_n=D_n$ in \eqref{key} with the 
decomposition $$\frac{1}{D_n}\log f_{k,n}:=I_1(k,n)+I_2(k,n)+I_3(k,n). $$
Thus we  have the decomposition  \begin{equation}\label{definiii}\begin{aligned}\frac{1}{D_n}\log \tilde{f}_{k,n}&=(1-\frac{k}{D_n})\log R_n+\frac{1}{D_n}\log f_{k,n}\\&=[(1-\frac{k}{D_n})\log R_n+I_1]+I_2+I_3.\end{aligned}\end{equation}
As before, $I_3$ goes to $0$ uniformly again since $D_n\to \infty$ as $n\to \infty$.

 We note that 
 $$I_2(k,n)=\frac{1}{2D_n} (\log (k+N_n)+\log D_n-\log n-\log k)$$
is decreasing with respect to $k\geq 1$ for fixed $N_n$, $D_n$ and $n$, thus we simply have $\sup_{0\leq k\leq D_n}\abs{I_2(k,n)}=\abs{I_2(1,n)}\vee \abs{I_2(D_n,n)}$. Since $I_2(D_n,n)=0$,  we further have
 $$ \sup_{0\leq k\leq D_n} \abs{I_2(k,n)} =\abs{I_2(1,n)}=\frac{1}{2D_n}\abs{\log (N_n+1)+\log D_n-\log n}.$$
By assumption \eqref{case 343}, we can choose $n$ large enough so that $N_n\geq \frac 12 n$, thus we have 
$$ \sup_{0\leq k\leq D_n} \abs{I_2(k,n)}  \leq \frac{1}{2D_n}(\log (\frac{n}{N_n})+\log  D_n)  \leq \frac{\log 2}{2D_n}+\frac{\log D_n}{2D_n}\to 0,$$
since $D_n\to \infty$ as $n\to \infty$. 
% we have
%\begin{equation}\label{control i2 in case 3}
%  \lim\limits_{n\to\infty}\sup_{0\leq k\leq D_n}\abs{I_2(k,n)}=0.
%\end{equation}

For $I_1$, we rewrite it as 
\begin{align*}
I_1&=\frac{1}{D_n} ((k+n-D_n)\log (k+N_n )-n\log n+D_n\log D_n-k\log k)\\&=\frac{1}{D_n}(n\log \frac{k+N_n}{n}+(k-D_n)\log (k+N_n)+D_n\log D_n-k\log k)) \\
 &=\frac{1}{D_n} (n\log (\frac{k+N_n}{n})+(k-D_n)\log (\frac{k+N_n}{n})+(k-D_n)\log n\\&+D_n\log D_n-k\log D_n-k\log(\frac{k}{D_n})  )\\
   &=\frac{n}{D_n}\log (\frac{n+k-D_n}{n})-\frac{k}{D_n}\log (\frac{k}{D_n})\\&+(\frac{k}{D_n}-1)(\log n-\log D_n)+(\frac{k}{D_n}-1)\log (\frac{k+N_n}{n}).
\end{align*}
 Thus we can rewrite 
\begin{align*}
\tilde I_1:&=  (1-\frac{k}{D_n})\log R_n+I_1\\
   &=\frac{n}{D_n}\log (\frac{n+k-D_n}{n})-\frac{k}{D_n}\log (\frac{k}{D_n})+(\frac{k}{D_n}-1)\log (\frac{k+N_n}{n}).
\end{align*}
%As a remark, the computations above indicate how we find  the rescaling factor: we need to choose the rescaling factor   so that the third term $(\frac{k}{D_n}-1)(\log n-\log D_n)$ in $I_1$ is cancelled. 
Now we put
\begin{equation}\label{the log f in case3}
 \log \tilde f=t-1-t\log t \,\,\, \mbox{for} \,\, 0\leq t\leq 1 \,\,\mbox{and} \,\, \log \tilde f= -\infty \,\, \mbox{for}\,\,  t>1.
 \end{equation}
 Then we can write $\tilde{I}_1$ as
\begin{equation}\label{control i1 in case 3}
  \tilde{I}_1=\log \tilde f(\frac{k}{D_n})+I_9,
\end{equation}
where $$I_9=\frac{n}{D_n}[\log (1+\frac{k-D_n}{n})-\frac{k-D_n}{n}]+
(\frac{k}{D_n}-1)\log (\frac{k+N_n}{n}).$$
Since  $|\log (1+x)|\leq \abs{x}$ and $\abs{\log(1+x)-x}\leq x^2$ when $\abs{x}$ is small, then we have the uniform estimate, 
$$\abs{ \log (1+\frac{k-D_n}{n})-\frac{k-D_n}{n} }\leq (\frac{k-D_n}{n})^2\leq (\frac{D_n}{n})^2$$ 
as $n$ large enough, which implies the first term in $I_9$ tends to $0$. 

Note that $1\geq \frac{k+N_n}{n}\geq \frac {N_n} n$, thus $|\log  \frac{k+N_n}{n}|\leq |\log \frac {N_n} n|=\abs{\log (1-\frac{D_n}{n})}\leq \frac{D_n}{n}$, if we combine this with the fact $|\frac{k}{D_n}-1|\leq 1$,   we prove that the second term in $I_9$  also tends to $0$. Hence, $I_9 \to 0$ as $n\to\infty$. Therefore, $$\lim_{n\to\infty} \sup_{0\leq k\leq D_n} |\tilde I_1-\log \tilde f(\frac k{D_n})|= 0.$$
If we combine the estimates of $\tilde I_1$, $I_2$ and $I_3$ above, we have proved  
\begin{equation}\label{limit of hat fkn in case 3}
  \lim\limits_{n\to \infty}\sup_{0\leq k\leq D_n}\abs{ \frac 1{D_n}\log \tilde{f}^{}_{k,n}-\log \tilde f(\frac{k}{D_n}) }=0.
\end{equation}
As a summary,  the coefficients $\tilde f_{k,n}$ satisfy \textbf{A2} with $L_n=D_n$, $T_0=1$, $\delta_n=0$ and $\tilde f$.  The Legendre-Fenchel transform $I(s)=\sup_{0\leq t\leq 1}(st+\log \tilde f(t))$ is
\[ I(s)= \begin{cases}
e^s-1 & s<0, \\s &s\geq 0.
\end{cases}
\] Thus, the explicit expression \eqref{fff}  of the limiting measure $\tilde\mu^K$ follows by Theorem \ref{theorem2}. 
In the end, the existence of the rescaling limit implies that $\frac{1}{D_n}\mu_{D_n}^K$ converges to the Dirac measure centered at $0$. %Intuitively, if we rescale the $D_n's$ zeros of $K^{(N_n)}_n$ by a factor $\frac{n}{D_n}$,  we get the measure $\mu$ which is supported in the unit disk centered at $0$. Now we reascale back, then $\frac{1}{D_n}\mu_{f_n}$ should be supported in $D(0,\frac{D_n}{n})$, the ball centered at 0 with radius $\frac{D_n}{n}$. 
%To verify this, we note that for any sequence $l_k \to \infty$ as $k \to \infty$, we can always find a further subsequence $n_k$ such that the probability measure
%$\frac1{D_{n_k}}\mu_{D_{n_k}}(\frac{D_{n_k}}{n_k}z ) $ converges almost surely to $\mu$, then we have almost surely
%$$\lim\limits_{k\to\infty}\int_{\mathbb C} \phi(z)[\frac{1}{D_{n_k}}d\mu_{D_{n_k}}^K(z)]=\lim%\limits_{k\to\infty}\int_{\C}\phi(\frac{D_{n_k}}{n_k}z)[\frac{1}{D_{n_k}}d\mu_{D_{n_k}}%^K(\frac{D_{n_k}}{n_k}z )]=\phi(0), $$
%for any smooth function $\phi$ with compact support.  This will imply that $\frac{1}{D_n}\mu_{D_n}%^K$ converges to $\delta_0$ in probability.

%For the last equality we use the following fact from real analysis:
%Let $\mu_n$ and $\mu$ be a sequence of deterministic finite measure on the metric space $X$, %let  $g_n$ and $g$ be uniformly bounded continuous functions on $X$, if $g_n$ converges to% $g$ pointwise on $X$ and $\mu_n$ converges weakly to $\mu$, then we have
%\begin{equation}
  %\lim\limits_{n\to\infty}\int_X g_nd\mu_n=\int_X gd\mu.
%\end{equation}

\subsection{Case \textcircled 4}\label{tesxt4}
Now we  prove Theorem \ref{theorem4} for the case where $D_n$ remains to be a fixed positive integer $m$. The proof makes use of the Rouch\'e's theorem.
We start with the following proposition regarding the convergence of zeros of a sequence of deterministic polynomials.
\begin{prop}\label{prop 2}
Let $G=\sum_{k=0}^m g_k z^k$, where $\{g_k\}$ are deterministic constants and $g_m\neq 0$. Let $G_n=\sum_{k=0}^m g_{k,n}z^k$, where $\{g_{k,n}\}$ are also deterministic. Assume $g_{k,n}$ converges to $g_k$ for each fixed $k$. Then, the measure of zeros $\mu_{G_n}$ will converge to $\mu_{G}$ in the sense of distribution.
\end{prop}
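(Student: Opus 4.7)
The plan is to prove this by the standard Rouch\'e-theorem argument for continuity of roots of polynomials in their coefficients, together with a "no escape to infinity" bound coming from the nonvanishing leading coefficient.

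First, since $g_m \neq 0$ and $g_{m,n} \to g_m$, for all sufficiently large $n$ we have $g_{m,n} \neq 0$, so $G_n$ is of degree exactly $m$ and has exactly $m$ zeros counted with multiplicity. Moreover, by Cauchy's classical bound on roots, every zero $z$ of $G_n$ satisfies $|z| \le 1 + \max_{0\le k < m} |g_{k,n}/g_{m,n}|$, and the right-hand side is bounded uniformly in $n$ because the numerators converge and the denominators converge to $g_m \neq 0$. So all zeros of $G_n$ lie in some fixed disk $\D_R$ independent of $n$.

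Second, I would localize zeros of $G_n$ near zeros of $G$ via Rouch\'e. Let $w_1,\dots,w_r$ be the distinct zeros of $G$ with multiplicities $k_1,\dots,k_r$ summing to $m$. Fix $\varepsilon>0$ smaller than half the minimum pairwise distance among the $w_i$, so that the closed disks $\overline{B(w_i,\varepsilon)}$ are pairwise disjoint. Since $G$ has no zeros on the compact set $\bigcup_i \partial B(w_i,\varepsilon)$, the quantity $\delta := \min_{i}\min_{z\in\partial B(w_i,\varepsilon)}|G(z)|$ is strictly positive. The coefficientwise convergence $g_{k,n}\to g_k$ implies $G_n \to G$ uniformly on the compact set $\overline{\D_R}$, so for $n$ large enough, $|G_n(z)-G(z)| < \delta \le |G(z)|$ on $\bigcup_i \partial B(w_i,\varepsilon)$. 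By Rouch\'e's theorem, $G_n$ has exactly $k_i$ zeros (with multiplicity) in $B(w_i,\varepsilon)$ for each $i$. Summing gives $m$ zeros accounted for in $\bigcup_i B(w_i,\varepsilon)$, which equals the total zero count of $G_n$ since all its zeros lie in $\D_R$. Hence, for $n$ large, every zero of $G_n$ lies within distance $\varepsilon$ of some zero of $G$, and the local multiplicities match.

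Finally, I would translate this localization into weak convergence of measures. For any continuous test function $\phi$ on $\mathbb{C}$ (with compact support, say) and any $\eta>0$, choose $\varepsilon$ so small that $|\phi(z)-\phi(w)| < \eta/m$ whenever $|z-w|<\varepsilon$ (using uniform continuity of $\phi$). The localization result above then gives, for all large $n$, a bijection $z_j^{(n)} \leftrightarrow w_{i(j)}$ between zeros of $G_n$ and zeros of $G$ (respecting multiplicity) with $|z_j^{(n)} - w_{i(j)}|<\varepsilon$, so
\begin{equation*}
\Bigl| \int \phi\, d\mu_{G_n} - \int \phi\, d\mu_{G} \Bigr|
= \Bigl| \sum_{j=1}^m \bigl(\phi(z_j^{(n)}) - \phi(w_{i(j)})\bigr)\Bigr| < \eta.
\end{equation*}
Letting $n\to\infty$ and then $\eta\to 0$ yields $\mu_{G_n}\to \mu_G$ weakly.

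There is no real obstacle; the only point that requires mild care is ensuring no zero of $G_n$ ``escapes'' the disks $B(w_i,\varepsilon)$, which is handled by combining the Cauchy bound (no escape to infinity) with the Rouch\'e count of $m$ zeros already inside the disks. Multiplicities are handled automatically because Rouch\'e counts with multiplicity.
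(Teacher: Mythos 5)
Your proposal is correct and follows essentially the same route as the paper: Rouch\'e's theorem localizes the $m$ zeros of $G_n$ in small disks around the zeros of $G$ (with matching multiplicities), and uniform continuity of the test function then gives weak convergence. The only cosmetic difference is that you invoke the Cauchy root bound to rule out escape to infinity, whereas the paper relies on the simple count that the disks already contain all $m$ zeros of the degree-$m$ polynomial $G_n$; both are fine.
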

\begin{proof} 
  Let's choose $\phi$ as the smooth test function with compact support and pick $\epsilon>0$ small enough.
 We first claim that for each zero $z_0$ of $G$ with multiplicity $\alpha_0$,  for $n$ large enough, $G_n$ has exactly $\alpha_0$ zeros  in $\D(z_0,\epsilon)$, the open disc centered at $z_0$ with radius $\epsilon$. Once this is done, since $G$ has $m$ zeros ($m$ is a finite number), we can pick a common $N_0$ such that when $n>N_0$,   $G_n$ will have exactly $\alpha_i$ zeros in $\D(z_i,\epsilon)$ for any $z_i$ in the zero set of $G$ with multiplicity $\alpha_i$. This means that we can make an appropriate ordering of the zero set of $G$ (denoted by $z_i$, $1\leq i\leq m$) and the zero set of $G_n$ (denoted by $z_{i,n}$, $1\leq i\leq m)$ such that $\abs{z_i-z_{i,n}}\leq \epsilon $ for all $i$. Then we have, 
\begin{align}
  \abs{\mu_{G_n}(\phi)-\mu_{G}(\phi)}& \leq \sum_{1\leq i\leq m}\abs{\phi(z_i)-\phi(z_{i,n})} \leq mK\epsilon.
\end{align}
    where $K$ is the sup norm of the derivative of $\phi$. Since $\epsilon$ is arbitrary small, this implies the weak convergence of $\mu_{G_n}$. All the rest is to prove the claim.

 Let's choose $\epsilon <1 $  small enough such that   $z_0$ is the only zero of $G$ with multiplicity $\alpha\geq 1$ in the closure of $\D(z_0,\epsilon)$. Assume $\abs{z_0}+1\leq R$ for some $R$. For any $z \in \overline{\D(z_0,\epsilon )}$, we have
\begin{equation}\label{close gn and g}
  \abs{G_n-G}\leq \sum_{k=0}^m \abs{g_{n,k}-g_k}R^k.
\end{equation}
Let's set $$\eta(\epsilon)=\min_{z\in \partial\D(z_0,\epsilon)}\abs{G(z)},$$
then as $n$ large enough, we have  $$\sum_{k=0}^m \abs{g_{n,k}-g_k}R^k <\eta(\epsilon), $$ which implies that $$ \abs{G_n(z)-G(z)}< \abs{G(z)}\,\,\,\mbox{for any}\,\,\, z\in\partial\D(z_0,\epsilon). $$Hence,  $G_n$ and $G$ has the same number of zeros in $\D(z_0,\epsilon)$ by Rouch\'e's theorem. This completes the proof of claim and hence  Proposition \ref{prop 2}.
\end{proof}
Let's apply Proposition \ref{prop 2} to prove Theorem \ref{theorem4}.  In the case of $D_n=m$ and $N_n=n-m$,  \eqref{def of fn} reads $$K_n^{(n-m)}(z)=\sum_{k=0}^m\xi_kf_{k,n}z^k.$$ 
To study the limiting behavior  of zeros of  $K_n^{(n-m)}(\frac{z}{n})$, we may alternatively consider the random polynomials $G_n(z)=n^mK_n^{(n-m)}(\frac{z}{n})$. The coefficients of $G_n$ is 
\begin{align*}
  g_{k,n} & =n^{m-k}f_{k,n}  =\frac{m!}{k!}\frac{n^{m-k}}{n(n-1)\cdots (n-(m-k)+1)}.
\end{align*}
Since $k$ and $m$ are both fixed when $n\to \infty$, we have
$$\lim\limits_{n\to\infty}g_{k,n}=\frac{m!}{k!} .$$
By Proposition \ref{prop 2}, the measure of zeros   $\mu_{G_n}$ will converge to $\mu_{f^K_m}$ almost surely, where $\mu_{f^K_m}$ is the random  measure of zeros of $f^K_m(z)=\sum_{k=0}^m \frac{\xi_k}{k!}z^k$.
The limit \eqref{ddddffff} follows from this since  $K_n^{(n-m)}(\frac{z}{n})$ have the same  zeros as  $G_n$. 
%, we  deduce from Proposition \ref{prop 2} that %$\mu_{G_n}$ a.s. converges to $\mu_{\widehat{f}}$, where $\widehat{f}=\sum_{k=0}^m\frac{1}{k!}\xi_kz^k$. Since $D_n=m$, we also have $\frac{1}{D_n}\mu_{\widehat{f}_n}$ a.s. converges to $\frac{1}{m} \mu_{\widehat{f}}$, as required.
%Since $\mu_{\widehat{f}_n}$ has the same distribution as $\mu_{p^K_{n,D_n}(\frac{z}{n})}$, we know $\frac{1}{D_n}\mu_{p^K_{n,D_n}(\frac{z}{n}) }$ converges to $\mu_{\widehat{f}}$ in distribution, as required.
In particular, the empirical measure of zeros of $K_n^{( n-m)}$ will converges to $\delta_0$.  

\section{General random polynomials}\label{generalpolynomials}
In this section, we will apply the estimates we derived for the Kac polynomials in \S \ref{Kac} to prove
Theorem \ref{theorem5} for the general random polynomials. 

Let $p_n$ be the general random polynomials of degree $n$ defined in \eqref{general}. Let's assume that the coefficients $p_{k,n}$ satisfy \textbf{A1} with the associated continuous function $p$ that is positive on $[0,1)$ and 
\begin{equation}\label{labelde}
	\lim\limits_{n\to\infty}\sup_{k\in[0,n]} \abs{\abs{p_{k,n}}^{\frac{1}{n}}-p(\frac{k}{n})  }=0.
\end{equation}
 The $N_n$-th derivative of $p_n$ is
\begin{equation}\label{def of origin u}
 p_{n}^{(N_n)}=\sum_{k=0}^{D_n} \xi_{k+N_n}  p_{k+N_n,n} f_{k,n}z^k,
\end{equation}
where $f_{k,n}$ is defined in \eqref{def of fk,n}.  
% (recall \eqref{def of fk,n} )
%$$f_{k,n}=\frac{(k+N_n)!D_n!}{k!n!}. $$
Since $\xi_k$ are i.i.d., it's equivalent to consider the following random polynomials
%we have $q_n$ has the same distribution as $u_n$, where $u_n$ is defined as
\begin{equation}\label{def of u}
 p_{n}^{(N_n)}=\sum_{k=0}^{D_n} \xi_kp_{k+N_n,n}f_{k,n}z^k,
\end{equation}
where \eqref{def of origin u} and \eqref{def of u} have the same distribution of zeros. 
%Applying the remark at the end of sec 3.1 we see it suffices to deal with $\mu_{u_n}$ to prove the conclusion in thm 5.
We set $$u_{k,n}=p_{k+N_n,n}f_{k,n},$$ then we rewrite
$$ p_{n}^{(N_n)}=\sum_{k=0}^{D_n} \xi_k u_{k,n}z^k.  $$
We now verify that $u_{k,n}$ satisfy    \textbf{A2}  with some associated function $u$. 
%We consider three cases:
%$$\lim\limits_{n\to\infty}\frac{N_n}{n}=a$$, where a is 0 or in (0,1) or 1.

\textbf{Case 1}: $N_n/n\to 0$. As in Case \textcircled 1 of Kac polynomials, we take $L_n=n$, $\delta_n=\frac{N_n}{n}$ and $T_0=1$. For fixed $n$, $f_{k,n}$ is increasing with $k$ since 
    $$\frac{f_{k+1,n}}{f_{k,n}}=\frac{k+1+N_n}{k+1}>1.$$ 
    Since $f_{D_n,n}=1$, it follows that
  $f_{k,n}\leq 1$ for all $n$ and $0\leq k\leq D_n$. By the assumptions  \textbf{A1}, $p$ is continuous on $[0,1]$ and therefore is bounded by $C$. Hence, 
\begin{align*}
 &\,\,\, \sup_{0\leq k\leq D_n}  \abs{\abs{u_{k,n}}^{\frac{1}{n}}-p(\frac{k}{n})  } \\&\leq
     \sup_{0\leq k\leq D_n} \abs{\abs{p_{k+N_n,n}}^{\frac{1}{n}}-p(\frac{k}{n})}{|f_{k,n}|}^{\frac{1}{n}}+
    \sup_{0\leq k\leq D_n} \abs{\abs{f_{k,n}}^{\frac{1}{n}}-1}p(\frac{k}{n})\\
    & \leq \sup_{0\leq k\leq D_n}  \abs{\abs{p_{k+N_n,n}}^{\frac{1}{n}}-p(\frac{k+N_n}{n})} + \sup_{0\leq k\leq D_n} \abs{p(\frac{k+N_n}{n})-p(\frac{k}{n})}\\&+
 C\sup_{0\leq k\leq D_n} \abs{|f_{k,n}|^{\frac{1}{n}}-1}\\
 &:=J_1+J_2+J_3
\end{align*}  
%\begin{align*}
  %& \sup_{0\leq k\leq D_n} \abs{\abs{u_{k,n}}^{\frac{1}{n}}-p(\frac{k}{n})  }\\
 %&\leq \sup_{0\leq k\leq D_n} f_{k,n}^{\frac{1}{n}}\abs{\abs{p_{k+N_n,n}}^{\frac{1}{n}}-p(\frac{k}{n})} + %\sup_{0\leq k\leq D_n} p(\frac{k}{n})\abs{\abs{f_{k,n}}^{\frac{1}{n}}-1}\\
%\end{align*}
Our assumption \eqref{labelde} implies that $J_1$ converges to $0$. 
%For $b\geq 0$, let $$\Delta(b)=\sup_{0\leq x\leq y\leq 1,y-x\leq b}\abs{p(x)-p(y)},$$we then have
%\begin{equation}\label{control j2}
  %J_2\leq \Delta(\frac{N_n}{n})
%\end{equation}
 $J_2$ converges to $0$  since $p$ is uniformly continuous on $[0,1]$ and $\frac{N_n}{n}$ converges to $0$ under the consideration. $J_3$ also converges to $0$ by the estimate \eqref{i1 in case 1} which we have already proved for the Kac polynomials.
Hence,  the coefficients $u_{k,n}$ satisfy \textbf{A2} with $L_n=n$, $\delta_n=\frac{N_n}{n}$,$T_0=1$ and the associated function $p$. The conclusion (1) of Theorem \ref{theorem5} then follows.

\textbf{Case 2}: $N_n/n\to a \in (0,1)$. As in Case \textcircled 2 of \S \ref{casecase2}, we set $L_n=n$, $\delta_n=\frac{N_n}{n}-a$ and $T_0=1-a$, then $(T_0-\delta_n)L_n=D_n$.  Let's choose $f_1$ as in \eqref{ffffffff} and set that $f$ coincides with $f_1$ in $[0, 1-a]$ and equals to $0$ in $[1-a,\infty)$ as in the Kac case. %Let's denote
%\[f(x)=\begin{cases}
%e^{(x+a)\log (x+a)-x\log x+(1-a)\log (1-a)}, & 0 \leq x\leq 1-a \\ 1,  & x>1-a.
 %\end{cases}
 %\]
Proceeding like Case 1 above,  we have
\begin{align*}
  &\,\,\,\, \sup_{0\leq k\leq D_n} \abs{\abs{u_{k,n}}^{\frac{1}{n}}-p((\frac{k}{n}+a)\wedge 1)f(\frac{k}{n}\wedge T_0)  }\\
 &\leq \sup_{0\leq k\leq D_n} |f_{k,n}|^{\frac{1}{n}}\abs{\abs{p_{k+N_n,n}}^{\frac{1}{n}}-p((\frac{k}{n}+a)\wedge 1)} \\&+ \sup_{0\leq k\leq D_n} p((\frac{k}{n}+a) \wedge 1)\abs{\abs{f_{k,n}}^{\frac{1}{n}}-f(\frac{k}{n}\wedge T_0)}\\
 &\leq \sup_{0\leq k\leq D_n} |f_{k,n}|^{\frac{1}{n}}\abs{\abs{p_{k+N_n,n}}^{\frac{1}{n}}-p(\frac{n+N_n}{n})}\\&+
 \sup_{0\leq k\leq D_n} |f_{k,n}|^{\frac{1}{n}} \abs{p(\frac{k+N_n}{n})-p((\frac{k}{n}+a) \wedge 1)}\\
 & +
 \sup_{0\leq k\leq D_n} p((\frac{k}{n}+a) \wedge 1)\abs{|f_{k,n}|^{\frac{1}{n}}-f(\frac{k}{n}\wedge T_0)}\\
 &:=J_1+J_2+J_3
\end{align*}
As in Case 1, our assumptions of $p$ imply that $J_1$ converge to $0$; $J_3$ converge to $0$ which is equivalent to \eqref{control i1 in case 2 final} as in the Kac case. Again using the boundedness of $f_{k,n}$ and the uniform continuity of $p$ together with the fact that 
$$\sup_{0\leq k \leq D_n}\abs{((\frac{k}{n}+a) \wedge 1)-\frac{k+N_n}{n}}\leq \abs{\delta_n},$$we have $J_2\to 0$ since $\delta_n\to 0$.  Hence, the coefficients $u_{k,n}$ satisfy \textbf{A2} with $u^a(t)=f(t)p(t+a)$, this will complete the proof of the part (2) in Theorem \ref{theorem5}.

\section{Random elliptic polynomials}\label{ellpticcase}
In this section, we will prove Theorem \ref{theorem6} for  the random elliptic random polynomials $E_n$ defined in
\eqref{elliptic}. Let's denote by
 %$$p^_n=\sum_{k=0}^n \xi_k(\frac{n!}{k!(n-k)!})^{\frac{1}{2}} z^k$. It is called the elliptic polynomial. Let 
 $p^E_{k,n}=\sqrt{n\choose k}.$ the coefficients.
 By Stirling's formula, one can prove that the coefficients $p_{k,n}^E$
satisfy   \textbf{A1} with the associated function $p^E$ given in \eqref{pe}.    Thus, the part (1) of Theorem \ref{theorem6} is the direct consequence of Theorem \ref{theorem5}.  Now let's prove part (2) of Theorem \ref{theorem6} which is the interesting part and the nontrivial ingredient is to find the rescaling factor. 
As in \eqref{def of u},  the $N_n$-th derivative of $E_n$ is equivalent to
 \begin{equation}\label{altermep}E_{n}^{(N_n)}=\sum_{k=0}^{D_n}\xi_k p^E_{k+N_n,n}f_{k,n}z^k:=\sum_{k=0}^{ D_n}\xi_k u^E_{k,n}z^k.\end{equation}
Let's first consider the case when $N_n/n\to 1$ and $D_n\to\infty$. By discarding a negligible lower order term and by Stirling's formula, we have 
\begin{align}\label{argument}
\nonumber& \frac 1{D_n} \log p^E_{k+N_n,n} \\& \nonumber\sim \frac{1}{2D_n}(n\log n-(k+N_n)\log (k+N_n)-(D_n-k)\log (D_n-k)) \\
\nonumber   & =\frac{1}{2}(\frac{k+N_n}{D_n}\log (\frac{n}{k+N_n})+\frac{D_n-k}{D_n}\log (\frac{n}{D_n-k}))\\
  \nonumber &=\frac{1}{2}( -\frac{n+k-D_n}{D_n}\log (\frac{n-D_n+k}{n})-\frac{D_n-k}{D_n}  \log (\frac{D_n-k}{D_n})+\frac{D_n-k}{D_n} \log (\frac{n}{D_n}))\\
   &=I_{1,1}+I_{1,2}+I_{1,3}.
\end{align}
By $\abs{\log (1+x)-x}\leq x^2$ when $\abs{x}$ is small, we can get the uniform estimate, 
\begin{align*}
  \abs{I_{1,1}-\frac{1}{2}(-\frac{n+k-D_n}{D_n}\frac{-D_n+k}{n}  )} & \leq \frac{n}{2D_n}(\frac{-D_n+k}{n})^2  \leq   \frac{n}{2D_n}(\frac{D_n}{n})^2 \to 0.
\end{align*}  
We also have the uniform estimate
\begin{align*}
  \abs{\frac 12(-\frac{n+k-D_n}{D_n}\frac{-D_n+k}{n}) -\frac{D_n-k}{2D_n}}= \frac{(D_n-k)^2}{2nD_n}\leq \frac{D_n}{2n}\to 0,
\end{align*}
it follows that if we define $$h_1=\frac 12 (1-t),$$
then
\begin{equation}\label{empty}
  \lim\limits_{n\to\infty} \sup_{0\leq k\leq D_n} \abs{I_{1,1}-h_1(\frac k{D_n})}=0.
\end{equation}
Let's put $$h_2=-\frac{1}{2}(1-t)\log (1-t),$$ then we can rewrite 
\begin{equation}\label{i123}I_{1,2}=h_2(\frac{k}{D_n}).\end{equation}
The trick now is to eliminate  $I_{1,3}$ by a rescaling factor. To be more explicit,  let's put $R_n=\frac n{D_n}$ again and put
\begin{equation}\label{scale p}
\tilde{p}^E_{k+N_n,n}=p^E_{k+N_n,n}R_n^{-\frac{D_n-k}2}.
 \end{equation}
 By defining in this way, we note that
 \begin{equation}\label{dsdf}
 	\frac 1{D_n}\log R_n^{-\frac{D_n-k}2}=-I_{1,3},
 \end{equation}
 hence,  if we combine \eqref{argument}-\eqref{dsdf} and define the function \begin{equation}\label{logpe}
 	\log \tilde p^E(x)=h_1+h_2=\frac{1}{2}(1-t)-\frac 12(1-t)\log (1-t),
 \end{equation}then we have proved 
\begin{equation}\label{limit of overline p}
  \lim\limits_{n\to\infty}\sup_{0\leq k\leq D_n}\abs{\frac{1}{D_n}\log \tilde{p}^E_{k+N_n,n}-\log \tilde p^E(\frac{k}{D_n})}=0.
\end{equation}
Let's further recall \eqref{def of hat fkn} in the proof of Case \textcircled 3 for the Kac case where
\begin{equation}\label{scale of f}
  \tilde{f}_{k,n}=f_{k,n}R_n^{D_n-k}, 
\end{equation}
%where $C_n=\frac{n}{D_n}$.
then we can rewrite \eqref{altermep} as
%$$=\sum_{k=0}^{D_n}\xi_k p^e_{k+N_n,n}f_{k,n}z^k$$,
%by \eqref{scale p} and \eqref{scale of f}, we get
$$E^{(N_n)}_n(z)=\sum_{k=0}^{D_n}\xi_k \tilde{p}^E_{k+N_n,n} \tilde{f}_{k,n}z^k R_n^{-\frac{D_n-k}{2}}.$$ 
Therefore, the rescaling random polynomials reads
\begin{equation}\label{overline u}
 E^{(N_n)}_n(\frac{z}{\sqrt{R_n}})=R_n^{-\frac {D_n}2}\sum_{k=0}^{D_n} \xi_k \tilde{p}^E_{k+N_n,n} \tilde{f}_{k,n}z^k. 
\end{equation}
Let's define
$$ \tilde E^{(N_n)}_n(z):=\sum_{k=0}^{D_n} \xi_k \tilde{p}^E_{k+N_n,n} \tilde{f}_{k,n}z^k.$$
 
Let's derive the limit of the empirical measure of zeros of   $E^{(N_n)}_n(\frac{z}{\sqrt{R_n}})$ which is the same as $ \tilde E^{(N_n)}_n(z)$. To do this, 
let's define the coefficients $\tilde {u}^E_{k,n}:=\tilde {p}^E_{k+N_n,n} \tilde{f}_{k,n}$, then  the estimates \eqref{limit of hat fkn in case 3} and  \eqref{limit of overline p}  imply that  $\tilde{u}^E_{k,n}$  satisfy \textbf{A2} with $L_n=D_n, \delta_n=0, T_0=1$ and  the associated  function  $\tilde {u}^E$ is given by
$\log \tilde{u}^E=\log \tilde p^E+\log \tilde f$. By the expressions \eqref{the log f in case3} and \eqref{logpe}, we have 
\[\log \tilde{u}^E(t)=\begin{cases}
\frac{1}{2}(t-1)-\frac{1}{2}(1-t)\log (1-t)-t\log t & 0\leq t\leq 1,
\\ -\infty &t>1.
\end{cases}
\]
Therefore, $\frac 1{D_n}\mu_{D_n}^{\tilde E}$, or equivalently  $\frac 1{D_n}\mathcal S_{\sqrt{R_n}}(\mu_{D_n}^{ E} )$, converges in probability to a deterministic measure. To find out the limit, we compute the  Legendre-Fenchel  transform of $-\log \tilde{u}^E$ as
\begin{align*}
  I(s) & =\sup_{0\leq t\leq 1}(st+\log \tilde u(t)) =\frac{1}{2}(t_s-1)-\frac 12\log (1-t_s),
\end{align*}
where $t_s=\frac{-1+\sqrt{1+4e^{-2s}}}{2e^{-2s}}$.
Therefore, \eqref{equationdddd} follows by Theorem \ref{theorem2}.
%  the limiting measure reads
%\begin{equation*}
%\mu(\D_r)=\frac{r(4+r^2-r\sqrt{4+r^2})}{2\sqrt{4+r^2}}, \,\,\,r>0,
%\end{equation*}
%which finishes the case when $D_n\to \infty$. 

The analysis for the case when $D_n$ remains a fixed number $m$ follows exactly the same approach as in \S \ref{tesxt4} for the Kac case.  Recall the definition of $u^E_{k,n}$ in \eqref{altermep}, if we replace $D_n=m$ and $N_n=n-m$,  then we can rewrite
%\begin{equation}\label{express ukn}
  %u^E_{k,n}=p^E_{k+N_n,n} f_{k,n}=\left(\frac{n!}{(k+N_n)!(n-k-N_n)!}\right)^{\frac{1}{2}}\frac{(k+N_n)!m! }{k!n!}
%\end{equation}
%Then we can rewrite it as
\begin{align*}
  u^E_{k,n} &=  \left(\frac{n!}{(k+n-m)!(m-k)!}\right)^{\frac{1}{2}}\frac{(k+n-m)!m! }{k!n!}\\
   & = \frac{m!}{k!}\left(\frac{(n-m+k)!  }{n!(m-k)!}\right)^{\frac{1}{2}}.
   %&=\frac{m!}{k!((m-k)!)^{\frac{1}{2}}}\frac{1}{(n(n-1)\cdots(n-(m-k)+1) )^{\frac{1}{2}}}.
\end{align*}
Now we consider the rescaling random polynomials $$\tilde E^m_n(z):=n^{\frac{m}{2}}E_n^{(n-m)}(\frac{z}{\sqrt{n}})=\sum_{k=0}^{m}\tilde{u}^E_{k,n}\xi_k z^k,$$
where  $\tilde {u}^E_{k,n}= u^E_{k,n} n^{\frac{m-k}2}$. Since $m$ and $k$ are both fixed when $n\to\infty$, we get
$$\lim\limits_{n\to\infty}\tilde {u}^E_{k,n}=\frac{m!}{k! ((m-k)!)^{\frac{1}{2}} }.$$
Therefore, since $\tilde E^m_n(z)$ have the same zeros as $E_n^{(n-m)}(\frac{z}{\sqrt{n}})$, then by Proposition \ref{prop 2}, the limiting
measure $\mathcal S_{\sqrt n}(\mu_{D_n}^E) $ when $D_n=m$ will tend to the random zeros of 
$$f^E_m=\sum_{k=0}^{m}\frac{1}{k! ((m-k)!)^{\frac{1}{2}}} \xi_kz^k$$ 
in distribution,  which completes the proof of Theorem \ref{theorem6}. 

\appendix
\section{Proof of Theorem \ref{theorem2} }\label{proofoftheorem2}
Now we sketch the proof of Theorem \ref{theorem2} by modifying the one in \cite{KZ}. 

Let's first recall the proof of Theorem \ref{theorem1} in \cite{KZ}. For  random analytic functions $F(z)$ defined in \eqref{analytic} where the coefficients satisfy \textbf{A1}, if one establishes the following convergence in probability
 \begin{equation}\label{proof of thm1}
   \frac{1}{n} \log \abs{F_n(z)}\to I(\log|z|)
 \end{equation}
as $n\to\infty$, then Theorem \ref{theorem1} follows by the classical Poincar\'e-Lelong formula. 
Kabluchko-Zaporozhets proved \eqref{proof of thm1} by establishing some appropriate upper and lower bounds for $\abs{F_n(z)}$, see estimates (22) and (27) in \cite{KZ}. 
 
% To derive \eqref{proof of thm1}, the authors K Z prove the following upper bound: for each $\epsilon>0$, there exists an a.s. finite random variable $M(\epsilon)$, s.t. for all sufficiently large n and all $z\in D_{e^{-2\epsilon}R_0}\backslash\{0\}$,
%\begin{equation}\label{upper bound in KZ}
  %\abs{G(z)}\leq Me^{n(I(\log(\abs{z}))+3\epsilon)},
%\end{equation}
%They also prove the lower bound: fix any $\epsilon>0$, $z\in D_{R_0}\backslash \{0\}$,
%\begin{equation}\label{lower bound in KZ}
  %\PR(\abs{G_n(z)}\leq e^{n(I(\log(\abs{z}))-3\epsilon)}=O(\frac{1}{\sqrt{n}})
%\end{equation}

Under the assumptions \textbf{A2}, the convergence radius is automatic infinity because we are now dealing with a finite sum for any fixed $n$. Given random polynomials $F_n$ in form of \eqref{analytic22} satisfying \textbf{A2}, to prove Theorem \ref{theorem2}, it's enough to derive the analogue convergence   
\begin{equation}\label{proof of thm2}
     \frac{1}{L_n} \log \abs{F_n(z)}\to I(\log|z|)
\end{equation} as $n\to\infty$,
where the convergence is also in probability. 
To prove this, we need the some upper and lower bounds as  in \cite{KZ}. 

For the upper bound, for any $\epsilon>0$, we have
\begin{equation}\label{upper bound}
    \abs{F_n(z)}\leq Me^{L_n(I(\log \abs{z})+3\epsilon+\delta_n^-(\log \abs{z})^+)}  \,\,\,\, \mbox{for  $n$ large enough,}
\end{equation}
where $M$ is an almost surely finite random variable depending on $\epsilon$. Here we use the convention that
for any real number $w$, $w^+$ and $w^-$ are the positive and negative parts of $w$, i.e. $w^+=w\vee 0 $ and $w^-=(-w) \vee 0 $. 

We also need to show the lower bound estimate
\begin{equation}\label{lower bound}
  \PR(\abs{F_n(z)}<e^{L_n(I(\log \abs{z})-4\epsilon)})=O(\frac{1}{\sqrt{L_n}})\quad \mbox{as}\,\,\, n\to\infty.
\end{equation}
Recall Lemma 4.4 in \cite{KZ}, we know that for any $A>0$, there exists an almost surely finite random variable $M'$ such that  $\abs{\xi_k}\leq M' e^{Ak}$ for all $k$ with probability one. If we set $A=\frac{\epsilon}{2T_0}$, then
for all $0\leq k\leq (T_0-\delta_n)L_n$, we have 
\begin{equation}\label{bounding xi}
  \abs{\xi_k}\leq M' e^{\frac{\epsilon k}{2T_0}}\leq M' e^{\epsilon L_n}.
\end{equation}
To prove \eqref{upper bound}, if we apply the bound \eqref{bounding xi} together with the assumptions \textbf{A2}, for $n$ large enough and $\delta$ small enough,  
we have 
\begin{align*}
  \abs{F_n(z)} &=\abs{\sum_{0\leq k\leq  (T_0-\delta_n)L_n}\xi_kp_{k,n}z^k }\leq  \sum_{0\leq k\leq (T_0-\delta_n)L_n}|\xi_k|\abs{p_{k,n}}\abs{z}^k \\
  & \leq M'e^{\epsilon L_n}\left(\sum_{0\leq k \leq (T_0-\delta_n^+)L_n}\abs{p_{k,n}}\abs{z}^k+\sum_{T_0L_n< k\ \leq (T_0+\delta_n^-)L_n }\abs{p_{k,n}}\abs{z}^k \right)\\
  &\leq M'e^{\epsilon L_n}  \sum_{0\leq k \leq (T_0-\delta_n^+)L_n}(e^{\frac{k}{L_n}\log \abs{z}+\log p(\frac{k}{L_n})}+\delta \abs{z}^{\frac{k}{L_n}}  )^{L_n}\\& +M'e^{\epsilon L_n} \sum_{T_0L_n< k \leq (T_0+\delta_n^-)L_n}(e^{(\frac{k}{L_n}-T_0)\log \abs{z}+(T_0\log \abs{z}+\log p(T_0))}+\delta \abs{z}^{\frac{k}{L_n}}  )^{L_n}.
  \end{align*}
  By the definition of the Legendre-Fenchel  transform, we further have
  \begin{align*}
 |F_n(z)|& \leq M'e^{2\epsilon L_n}(e^{I(\log \abs{z})}+\delta (1\vee \abs{z}^{T_0})  )^{L_n}\\&+M'e^{2\epsilon L_n}e^{\delta_n^- (\log \abs{z})^+L_n} (e^{I(\log \abs{z})}+\delta (1\vee \abs{z}^{2T_0}) )^{L_n}\\
  &\leq M''e^{L_n(I(\log \abs{z})+3\epsilon+\delta_n^-(\log \abs{z})^+ ) }.
\end{align*}
where $M''$ is another almost surely finite random variable, which completes the proof of the upper bound. 

For the lower bound \eqref{lower bound}, if we choose the  set $J$ as the one in the proof of (27) in \cite{KZ}, 
then the assumptions $L_n\to\infty$ and $\delta_n\to 0$ imply that the set $\{k:0\leq k\leq (T_0-\delta_n)L_n,\frac{k}{L_n}\in J \}$ has cardinality bounded below by $\frac{\abs{J}}{2}L_n$. The rest proof follows the one in \cite{KZ} by replacing $n$ by $L_n$ and hence the lower bound follows.

\end{document}